\newtheorem{theorem}{Theorem}[section]
\newtheorem*{theorem*}{Theorem}
\newtheorem{proposition}[theorem]{Proposition}
\newtheorem{corollary}[theorem]{Corollary}
\newtheorem*{corollary*}{Corollary}
\newtheorem{definition}[theorem]{Definition}
\newtheorem{conjecture}[theorem]{Conjecture}
\numberwithin{equation}{section}
\title{Hitchin Harmonic Maps are Immersions}
\author{Andrew Sanders}
\begin{document}

\address{Department of Mathematics, Statistics and Computer Science, University of Illinois at Chicago, Chicago, IL 60607 USA}
\email{andysan@uic.edu}
\thanks{Sanders gratefully acknowledges partial support from the National Science Foundation Postdoctoral Research Fellowship 1304006 and from U.S. National Science Foundation grants DMS 1107452, 1107263, 1107367 "RNMS: GEometric structures And Representation varieties" (the GEAR Network).}

\keywords{Hitchin representations, Higgs bundles, Harmonic maps, Minimal immersions, Entropy}

\date{}

\subjclass[2010]{Primary: 53C42(Immersions), 53C43 (Differential geometric aspects of harmonic maps), 53A10 (Minimal surfaces), 53C35 (Symmetric spaces), 37C35 (Orbit growth); Secondary: 28D20 (Entropy and other invariants), 30F60 (Teichm\"{u}ller theory),  .}

\begin{abstract}
In \cite{HIT92}, Hitchin used his theory of Higgs bundles to construct an important family of representations $\rho:\pi_1(\Sigma)\rightarrow G^{r}$ where $\Sigma$ is a closed, oriented surface of genus at least two, and $G^{r}$ is the split real form of a complex adjoint simple Lie group $G.$  These \textit{Hitchin} representations comprise a component of the space of conjugacy classes of representations of $\pi_1(\Sigma)$ into $G^{r}$ and are deformations of the irreducible Fuchsian representations $\pi_1(\Sigma)\rightarrow \text{SL}_2{\mathbb{R}}\rightarrow G^{r}$
which uniformize the surface $\Sigma.$  For any choice of marked complex structure on the surface and any Hitchin representation, we show that the corresponding equivariant harmonic map is an immersion.  Pulling back the Riemannian metric on the symmetric space $G/K,$ we construct a map from the space of Hitchin representations to the space of isotopy classes of Riemannian metrics on the surface $\Sigma.$  As an application of this procedure, we obtain a new lower bound on the exponential growth rate of orbits in $G/K$ under the action of the image of a Hitchin representation.
\end{abstract}
\maketitle

\section{Introduction}

Equivariant harmonic maps became an important tool to study properties of representations of fundamental groups of closed manifolds some time after the foundations of the theory were laid by Eells-Sampson \cite{ES64}.  Many of the more remarkable applications were to rigidity theorems, of which the work of Toledo \cite{TOL89}, Gromov-Schoen \cite{GS92}, and Corlette \cite{COR95} are just a few important examples, with a primary tool being the Bochner formula for harmonic maps.  At around the same time, the theory of Higgs bundles was introduced by Hitchin \cite{HIT87} and further developed by Simpson \cite{SIM92}, with important analytic foundations laid by Donaldson \cite{DON87} and Corlette \cite{COR88}, showing that equivariant harmonic maps could be used as a bridge between representation varieties for fundamental groups of compact, K\"{a}hler manifolds (of which Riemann surfaces are the most well developed, and rich, examples) and the complex analytic theory of Higgs bundles.  This dictionary came to be known as the \textit{non-abelian Hodge correspondence}, and while the theory provides an amazing connection between seemingly disparate subjects, perhaps as a result of this unexpected relationship, it is very difficult to precisely translate information from one side of the story to the other.  The aim of the present paper is to show that some information about the basic properties of equivariant harmonic maps \textit{can} be rather easily read off from the data of Higgs bundles.

A harmonic map $f:X\rightarrow M$ from a Riemann surface to a Riemannian manifold $M$ is an extremizer of the (conformally invariant) Dirichlet energy,
\begin{align}
E(f)=\frac{1}{2}\int_{X}\lVert df\rVert^2 dV.
\end{align}
Just as the theory of harmonic functions in \textit{real} dimension two is inextricably linked to the theory of holomorphic functions in \textit{complex} dimension one, harmonic maps $f:X\rightarrow M$ also give rise to holomorphic objects.  Precisely, the $(1,0)$-part of the differential $df: TX\rightarrow f^{*}(TM)$ is a holomorphic section of the canonical bundle $K$ of $X$ twisted by the pull-back of the complexified tangent bundle $f^{*}(TM)\otimes \mathbb{C}$ of $M$ with a suitable complex structure arising from the Levi-Civita connection on $M.$  The theory of Higgs bundles, seen in this guise, gives an integrability condition: given a holomorphic section $\phi$ of $K\otimes (TM\otimes \mathbb{C}),$ it describes when this holomorphic section \textit{is} the $(1,0)$-part of the differential of a harmonic map $f:X\rightarrow M.$  Thus, this allows us to determine properties of the differential of harmonic maps (such as the rank), by analyzing this associated holomorphic section.  We aim to apply this process in the case of harmonic maps which are equivariant for \textit{Hitchin} representations.

In \cite{HIT92}, Hitchin discovered a special component of the space of conjugacy classes of representations $\pi_1(\Sigma)\rightarrow G^{r}$ where $\Sigma$ is a closed, oriented surface of genus at least two, and $G^r$ is the split real form of a complex adjoint simple Lie group $G.$  As a fundamental example consider $G=\text{PSL}_n(\mathbb{C})$ and $G^{r}=\text{PSL}_n(\mathbb{R}).$  This discovery was made via choosing a complex structure (a polarization) $\sigma$ on $\Sigma$ and identifying a component of the space of poly-stable $G$-Higgs bundles on $(\Sigma,\sigma),$ which via the non-abelian Hodge correspondence Hitchin \cite{HIT87} and Simpson \cite{SIM92} had developed earlier, yielded representations $\pi_1(\Sigma)\rightarrow G.$  Hitchin used gauge theoretic techniques to conclude that this component of representations consisted of $\rho:\pi_1(\Sigma)\rightarrow G^{r}$ taking values in the split real form of $G.$  Moreover, this component was characterized by the fact that it contained all of the Fuchsian uniformizing representations $\rho:\pi_1(\Sigma)\rightarrow \text{SL}_{2}(\mathbb{R})\rightarrow G^{r}$ where the latter arrow indicates the unique, irreducible representation arising from exponentiating the principal $3$-dimensional sub-algebra inside the Lie algebra of $G.$  It was originally discovered by Goldman \cite{GOL90} for $G^{r}=\text{SL}_3(\mathbb{R})$ (before the paper of Hitchin), and then later, by Labourie in general, that every such \textit{Hitchin} representation was discrete and faithful, with every element $\rho(\gamma)$ diagonalizable with distinct real eigenvalues (purely loxodromic).  

Most of the progress in understanding the geometry of Hitchin representations has since arisen from dynamical and topological methods, most notably through the important introduction by Labourie of the notion of an \textit{Anosov} representation \cite{LAB06}.  The purpose of this paper is to indicate that the theory of Higgs bundles, which was the original genesis of Hitchin representations, can be used to reveal some of the geometry in this context.  In particular, given any Hitchin representation $\rho:\pi_1(\Sigma)\rightarrow G^{r},$ a foundational result of Corlette \cite{COR88} implies the unique existence of an equivariant harmonic map,
\begin{align}
f:(\widetilde{\Sigma},\sigma)\rightarrow G/K,
\end{align}
where $K\subset G$ is the maximal compact subgroup.  Using the Higgs bundle parameterization our main theorem is the following basic property of all these harmonic maps:
\begin{theorem}\label{mainthm}
Let $G$ be a complex adjoint simple Lie group and $\rho\in \text{Hit}(G)$ a Hitchin representation.  For any marked complex structure $\sigma$ on $\Sigma,$ let $f_{\rho}:(\widetilde{\Sigma},\sigma)\rightarrow G/K$ be the corresponding $\rho$-equivariant harmonic map.  Then $f_{\rho}$ is an immersion.
\end{theorem}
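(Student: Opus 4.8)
The plan is to reformulate the immersion property as a pointwise nondegeneracy condition on the Higgs field and then to exploit the rigid algebraic structure of the Hitchin section. Since the domain is a surface, $f_{\rho}$ is an immersion precisely when its differential is injective at every point, equivalently when the pulled-back symmetric $2$-tensor $f_{\rho}^{*}g$ is a genuine (positive-definite) metric. First I would pass to the language of the non-abelian Hodge correspondence: after trivializing $f_{\rho}^{*}(T(G/K)\otimes\mathbb{C})$ by the harmonic metric $H$, the $(1,0)$-part $\partial f_{\rho}$ is identified with the Higgs field $\phi$, a holomorphic section valued in $\mathfrak{p}^{\mathbb{C}}\cong\mathfrak{g}$, where $\mathfrak{g}=\mathfrak{k}\oplus\mathfrak{p}$ is the Cartan decomposition associated to $K$. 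In a local conformal coordinate $z$, the $(2,0)$-part of $f_{\rho}^{*}g$ is the Hopf differential $\alpha\,dz^{2}=\langle\phi,\phi\rangle\,dz^{2}$, where $\langle\cdot,\cdot\rangle$ is the invariant bilinear form, and the $(1,1)$-part is the energy density $\beta=\langle\phi,\phi^{*_{H}}\rangle=\lVert\phi\rVert_{H}^{2}$. A direct computation of $\det f_{\rho}^{*}g$ shows that $f_{\rho}$ is an immersion at $p$ if and only if $\beta(p)>\lvert\alpha(p)\rvert$, which by the Cauchy--Schwarz inequality is equivalent to the statement that $\phi(p)$ and its $H$-adjoint $\phi^{*_{H}}(p)$ are linearly independent over $\mathbb{C}$; that is, $\phi(p)$ is not a complex scalar multiple of an $H$-Hermitian element.

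Next I would bring in the defining feature of the Hitchin section. Grading $\mathfrak{g}=\bigoplus_{k}\mathfrak{g}_{k}$ by the semisimple element $h$ of the principal three-dimensional subalgebra $\langle e,h,f\rangle$, the Higgs field in the Hitchin parameterization takes the form $\phi=e+\sum_{j}q_{m_{j}+1}\,e_{j}$, where $e$ is the principal nilpotent occupying the extreme (lowest) graded summand, the $e_{j}$ are the highest-weight vectors sitting in strictly higher graded summands, and the $q_{m_{j}+1}$ are holomorphic differentials. The component of $\phi$ in the lowest graded piece is therefore always exactly $e$, which, being the bundle map encoding the nowhere-vanishing isomorphisms between consecutive graded summands, never vanishes. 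In particular $\phi$ is nowhere zero, so $df_{\rho}$ has rank at least one everywhere; there are no branch points, and the only way the immersion property can fail is at points where the rank of $df_{\rho}$ drops to exactly one.

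It remains to rule out such rank-one points, and this is where the main difficulty lies. Suppose toward a contradiction that $\phi^{*_{H}}(p)=c\,\phi(p)$ with $\lvert c\rvert=1$ at some $p$. Then $[\phi,\phi^{*_{H}}](p)=0$, so Hitchin's equation $F_{H}+[\phi,\phi^{*_{H}}]=0$ forces the curvature of the Chern connection to vanish at $p$; moreover $\beta(p)=c\,\alpha(p)$, and since $\alpha=\langle\phi,\phi\rangle$ is (up to a constant) the quadratic differential $q_{2}$, all higher graded contributions pairing trivially under $\langle\cdot,\cdot\rangle$, such a degeneracy can occur only where $q_{2}$ itself is nonzero. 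The heart of the argument is to combine the grading with the additional reality constraint satisfied by a Hitchin-section Higgs field, namely that $\phi$ is valued in $(\mathfrak{p}^{r})^{\mathbb{C}}$ for the split real form $G^{r}$, equivalently that $\phi$ is symmetric with respect to the holomorphic orthogonal (or symplectic) structure $Q$ preserved by the reduction to $G^{r}$. Using $Q$-symmetry one computes $\phi^{*_{H}}=(Q^{-1}H)^{-1}\,\overline{\phi}\,(Q^{-1}H)$; tracking the behaviour of the two sides of $\phi^{*_{H}}=c\,\phi$ under the $h$-grading, the extreme graded component produced on the right cannot be reconciled with $c$ times that of $\phi$ unless $e$ vanishes, which is the desired contradiction.

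The step I expect to be the principal obstacle is precisely this last one, because the harmonic metric $H$ is not explicit and, for the full Hitchin section, is not diagonal with respect to the grading, so $*_{H}$ does not cleanly reverse the $h$-grading and $\phi^{*_{H}}$ will in general have components in every graded summand. The crux is thus to extract just enough grading and reality information about $H$, via the uniqueness of the harmonic metric and its compatibility with the real structure of the split form, to pin down the extreme graded component of $\phi^{*_{H}}$. Once that component is controlled, the nowhere-vanishing of the principal nilpotent $e$ closes the argument and establishes that $df_{\rho}$ has rank two everywhere, so that $f_{\rho}$ is an immersion.
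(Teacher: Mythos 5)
Your first two steps are correct, and they amount to a repackaging of the criterion the paper itself uses. The determinant computation (immersion at $p$ if and only if the energy density strictly dominates the modulus of the Hopf differential, hence, by Cauchy--Schwarz, if and only if $\phi(p)$ and $\phi^{*_H}(p)$ are linearly independent over $\mathbb{C}$) is equivalent to Proposition \ref{immersion}: by \eqref{phidefine} the conjugation on $f_{\rho}^{*}(T(G/K))\otimes\mathbb{C}$ fixing $f_{\rho}^{*}(T(G/K))$ is exactly the involution sending $\phi$ to $\phi^{*_H}$, so the vanishing of $2\,\text{\textnormal{Re}}\left(\partial f_{\rho}\circ\Psi(v)\right)$ for some unit tangent vector $v$ at $p$ is the same as $\phi^{*_H}(p)$ being a unit-modulus multiple of $\phi(p)$. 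Your branch-point step (the constant $e_{-1}$-summand forces $\phi$ to be nowhere zero) is likewise the same observation the paper makes with the $\underline{\mathfrak{g}_{-1}}\otimes K^{-1}$ component.

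The genuine gap is the one you flag yourself: nothing in the proposal excludes the rank-one points. The sketch in your third paragraph is not an argument --- as your final paragraph concedes, $\phi^{*_H}$ is formed with the harmonic metric $H$, which is neither explicit nor known to respect the principal grading at a general point of the Hitchin section, so no graded component of $\phi^{*_H}$ is under your control; and the intermediate facts you do establish ($[\phi,\phi^{*_H}](p)=0$, hence $F(\nabla^{H})(p)=0$, and $q_2(p)\neq 0$) produce no contradiction by themselves. As it stands, the proposal proves only that $f_{\rho}$ has no branch points, not that it is an immersion.

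For the comparison: the paper closes this step with a purely formal computation --- it expands $2\,\text{\textnormal{Re}}\left(\phi\left(\frac{\partial}{\partial z}\right)\right)$ componentwise in $E=\bigoplus_i \underline{\mathfrak{g}_i}\otimes K^i$, identifies the $\underline{\mathfrak{g}_{-1}}\otimes K^{-1}$ component with $\frac{\partial}{\partial x}\otimes e_{-1}\neq 0$, and concludes from directness of the sum, with no property of $H$ invoked. You should be aware that this computation presupposes that the conjugation defining $\text{\textnormal{Re}}$ preserves each graded summand, and your closing objection applies to it verbatim: since that conjugation must send $\phi$ to $\phi^{*_H}$, it is the $H$-Hermitian adjoint $X\mapsto H^{-1}\overline{X}^{t}H$, which is metric-dependent and, even when $H$ is diagonal for the grading, interchanges $\underline{\mathfrak{g}_{i}}\otimes K^{i}$ with $\underline{\mathfrak{g}_{-i}}\otimes K^{-i}$ rather than preserving them. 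In the test case $G=\text{PSL}_2(\mathbb{C})$, with $H=\mathrm{diag}(h,h^{-1})$ in the splitting $K^{1/2}\oplus K^{-1/2}$ and $\phi=\left(\begin{smallmatrix}0&q\\ 1&0\end{smallmatrix}\right)dz$, the $\underline{\mathfrak{g}_{-1}}\otimes K^{-1}$ component of $2\,\text{\textnormal{Re}}\left(\phi\left(\frac{\partial}{\partial z}\right)\right)$ works out to $(1+\bar{q}h^{2})$ times the frame vector, so injectivity of $df_{\rho}$ at $p$ is equivalent to $\lvert q\rvert h^{2}\neq 1$ there; that this holds everywhere is exactly the nowhere-vanishing of the Jacobian of the associated harmonic diffeomorphism, i.e.\ the Schoen--Yau theorem \cite{SY78} that the paper quotes for this very case, and it is an analytic consequence of the self-duality equations \eqref{SD equations}, not of the direct-sum decomposition. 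So the step you could not complete is the true crux of the theorem: any complete argument must use that $H$ actually solves \eqref{SD equations} (for instance through maximum-principle estimates on the relevant components of $H$), and neither your proposal nor a purely algebraic grading argument supplies that input.
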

In the case $G=\text{PSL}_2(\mathbb{C}),$ this was observed by Hitchin \cite{HIT87} in his original paper introducing Higgs bundles through an application of a general theorem of Schoen-Yau \cite{SY78} about harmonic maps between Riemann surfaces which are homotopic to diffeomorphisms.

There is an interesting, and not thoroughly explored, relationship between the immersiveness of equivariant harmonic maps $\widetilde{\Sigma}\rightarrow G/K$ and the algebra/geometry of the corresponding representations $\pi_1(\Sigma)\rightarrow G.$  When $G=\text{PSL}_2(\mathbb{R}),$ one can use the well-developed harmonic map theory to assert that an equivariant harmonic map $\widetilde{\Sigma}\rightarrow \mathbb{H}^2$ is an immersion if and only if the associated representation if Fuchsian.  A theorem of Goldman \cite{GOL80} verifies that this is equivalent to the representation being discrete and faithful.  Though we do not analyze the situation in this paper, this line of ideas can not be carried over without change in the higher rank case.  In particular, when $G^{r}=\text{PSp}_4(\mathbb{R}),$ Bradlow, Garcia-Prada, and Gothen \cite{BGG12} have discovered many components of representations $\pi_1(\Sigma)\rightarrow \text{PSp}_4(\mathbb{R})$ which are not Hitchin, but are nonetheless entirely composed of discrete, faithful representations.  It seems very likely that the techniques of the current paper can be used to show that the corresponding equivariant harmonic maps in those cases are also immersions.  It is unclear in what generality the relationship between discrete and faithful representations and equivariant harmonic maps being immersions can be pushed, mostly due to the highly transcendental nature of constructing harmonic maps.  We state the following as a question:

\textbf{Question:}  Let $\rho:\pi_1(\Sigma)\rightarrow G$ be a discrete, faithful, reductive representation where $G$ is a complex, semi-simple Lie group.  Under what conditions are the corresponding equivariant harmonic maps immersions?

The penultimate section of the paper puts forward a dynamical application of Theorem \ref{mainthm}.  The primary result concerns the study of the exponential growth rate of orbits in the symmetric space $G/K$ for the group $\Gamma=\rho(\pi_1(\Sigma))$ acting via isometries, where $\rho$ is a Hitchin representation.  We produce a lower bound on this exponential growth rate in terms of the data of the average sectional curvature of an immersed, equivariant minimal surface.
\begin{theorem}
Let $\rho:\pi_{1}(\Sigma)\rightarrow G$ be a Hitchin representation where $G$ is a complex adjoint simple Lie group $G.$  Then there exists
\begin{align}
f_{\rho}:\widetilde{\Sigma}\rightarrow G/K,
\end{align}  
a $\rho$-equivariant immersed minimal surface.  Let $(g,B)$ denote the induced metric and second fundamental form of this immersion.
Let $h(\rho)$ be the volume entropy of $\rho.$  Then,
\begin{align}
\frac{1}{\text{\textnormal{Vol}}(g)}\int_{\Sigma}\sqrt{-\text{\textnormal{Sec}}(T_{f_{\rho}}(f_{\rho}(\widetilde{\Sigma})))+\frac{1}{2}\lVert B\rVert_{g}^2} \ dV_{g} \leq h(\rho).
\end{align}
The first term under the radicand is the ambient sectional curvature of the tangent two plane to the minimal surface in the symmetric space $G/K.$
\end{theorem}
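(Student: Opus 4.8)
The plan is to reduce the stated inequality to a purely Riemannian statement about the induced metric $g$ on the minimal surface, and then to control that quantity through the dynamics of the geodesic flow. First I would produce the immersed minimal surface whose existence is asserted: by Theorem \ref{mainthm} the equivariant harmonic map is an immersion for \emph{every} marked complex structure, so it suffices to choose $\sigma$ making $f_{\rho}$ conformal. Such a $\sigma$ arises as a critical point of the energy over Teichm\"{u}ller space, at which the Hopf differential (equivalently the quadratic component of the Higgs field in the Hitchin parametrization) vanishes; since $\rho$ is reductive the energy is proper and attains a minimum. At such a $\sigma$ the map $f_{\rho}$ is simultaneously conformal (hence a possibly branched minimal immersion) and an immersion (hence genuinely branch-free), and pulling back the metric of $G/K$ yields $(g,B)$.

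The key structural observation is that the radicand is exactly $-K_{g}$, the Gauss curvature of the induced metric. For a minimal surface the principal curvatures are $\pm\lambda$, so $\det B=-\tfrac{1}{2}\lVert B\rVert_{g}^{2}$, and the Gauss equation reads $K_{g}=\mathrm{Sec}(T_{f_{\rho}}(f_{\rho}(\widetilde{\Sigma})))+\det B=\mathrm{Sec}(T_{f_{\rho}}(f_{\rho}(\widetilde{\Sigma})))-\tfrac{1}{2}\lVert B\rVert_{g}^{2}$. Since $G/K$ has nonpositive sectional curvature we have $K_{g}\le 0$, the integrand is well defined, and the claimed bound is equivalent to
\begin{align}
\frac{1}{\mathrm{Vol}(g)}\int_{\Sigma}\sqrt{-K_{g}}\,dV_{g}\le h(\rho).
\end{align}
Next I would compare the two relevant exponential growth rates. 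Because $g$ is the pulled-back metric, $f_{\rho}$ is distance nonincreasing from $(\widetilde{\Sigma},g)$ into $G/K$, so $d_{G/K}(f_{\rho}(x),\rho(\gamma)f_{\rho}(x))\le d_{g}(x,\gamma x)$ for every $\gamma\in\pi_{1}(\Sigma)$. Hence every lattice point counted in the surface metric is also counted by the symmetric-space distance, giving $h(\rho)\ge h_{\mathrm{vol}}(g)$, where $h_{\mathrm{vol}}(g)$ denotes the volume entropy of the closed surface $(\Sigma,g)$. It therefore remains to establish the intrinsic inequality $h_{\mathrm{vol}}(g)\ge \mathrm{Vol}(g)^{-1}\int_{\Sigma}\sqrt{-K_{g}}\,dV_{g}$ for a nonpositively curved closed surface.

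For this last step I would pass to the geodesic flow of $(\Sigma,g)$ on the unit tangent bundle $SM$ with its Liouville probability measure $m$. Since $g$ is nonpositively curved, Manning's theorem gives $h_{\mathrm{vol}}(g)=h_{\mathrm{top}}$, while the variational principle gives $h_{\mathrm{top}}\ge h_{m}$, and Pesin's entropy formula for the smooth invariant measure $m$ yields $h_{m}=\int_{SM}\lambda^{+}\,dm$, with $\lambda^{+}$ the nonnegative Lyapunov exponent. Writing the unstable Riccati solution $u\ge 0$ along each geodesic, so that $u'+u^{2}=-K_{g}$, one has the pointwise estimate
\begin{align}
\sqrt{-K_{g}}=\sqrt{u'+u^{2}}\le u+\tfrac{1}{2}(\log u)',
\end{align}
valid because $u+\tfrac{1}{2}(\log u)'=\tfrac{u^{2}-K_{g}}{2u}\ge 0$; integrating against the flow-invariant $m$ annihilates the total-derivative term, so $\int_{SM}\sqrt{-K_{g}}\,dm\le\int_{SM}u\,dm=\int_{SM}\lambda^{+}\,dm$. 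Since $m$ projects to normalized Riemannian area, $\int_{SM}\sqrt{-K_{g}}\,dm=\mathrm{Vol}(g)^{-1}\int_{\Sigma}\sqrt{-K_{g}}\,dV_{g}$, and the chain of inequalities closes. I expect this final dynamical step to be the main obstacle, and in particular the degeneracy on the locus where $K_{g}=0$ (where $u$ may vanish and $\log u$ blow up): there I would either approximate the nonpositively curved metric by negatively curved ones, or simply note that $\sqrt{-K_{g}}$ vanishes on that locus and restrict the Riccati estimate to the region where $K_{g}<0$.
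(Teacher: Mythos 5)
Your proposal has the same skeleton as the paper's proof: produce an equivariant conformal harmonic map as a critical point of the energy over Teichm\"uller space, use Theorem \ref{Main} to promote the resulting branched minimal immersion to an honest immersion, pull back the metric, compare orbit growth in $(\widetilde{\Sigma},g)$ with orbit growth in $G/K$ via the fact that an isometric immersion is distance non-increasing, and finish with the Gauss equation $K_{g}=\mathrm{Sec}-\tfrac{1}{2}\lVert B\rVert_{g}^{2}$ together with the inequality $\mathrm{Vol}(g)^{-1}\int_{\Sigma}\sqrt{-K_{g}}\,dV_{g}\le E(g)$ for nonpositively curved metrics. The difference is that the paper takes its two external inputs as citations --- existence of the equivariant branched minimal immersion is Labourie's theorem \cite{LAB08}, and the curvature--entropy inequality is Theorem \ref{thm: entbound}, quoted from \cite{SAN14} as a corollary of Manning \cite{MAN81} --- whereas you attempt to reprove both, and both reproofs have defects.

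First, your justification of the existence step, ``since $\rho$ is reductive the energy is proper and attains a minimum,'' is false as stated: reductivity does not imply properness of the energy functional on Teichm\"uller space (a representation with bounded image is reductive, its equivariant harmonic maps are constant, and its energy functional is identically zero). Properness for Hitchin representations is precisely the content of Labourie's theorem and rests on these representations being well-displacing, which comes from the Anosov property \cite{LAB06}, not from reductivity. Second, in your dynamical argument the degeneracy at the zero set of the unstable Riccati solution $u$ is a genuine obstacle, and your first proposed fix does not work: the identity $\int(\log u)'\,dm=0$ requires integrating the flow derivative over a \emph{flow-invariant} set on which $\log u$ is integrable, and the lift of $\{K_{g}<0\}$ to $T^{1}(\Sigma)$ is not invariant; moreover the vanishing of $u$ is governed by the entire backward orbit, not by the pointwise sign of $K_{g}$, so the degeneracy cannot be localized that way. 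Your second fix, approximation by negatively curved metrics, can be carried out (conformally: solve $\Delta_{g}\psi=K_{g}+2\pi\lvert\chi(\Sigma)\rvert/\mathrm{Vol}(g)$ and take $g_{t}=e^{2t\psi}g$ for $t\in(0,1]$, which has strictly negative curvature and whose volume entropy and curvature integral converge to those of $g$ as $t\to 0$); a cleaner repair is to replace $u$ by $u+\epsilon$ in your AM--GM step, giving $\sqrt{-K_{g}}\le u+\epsilon+\tfrac{1}{2}\bigl(\log(u+\epsilon)\bigr)'$ with $\log(u+\epsilon)$ bounded, and then let $\epsilon\to 0$. So the architecture is sound and matches the paper, but as written the existence step rests on a false implication and the entropy step is incomplete; both are repaired most efficiently by citing \cite{LAB08} and \cite{MAN81}, which is exactly what the paper does.
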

This theorem is an immediate extension of work the author did in the setting of negative curvature in \cite{SAN14}.  It is interesting to note that in that case, the principal application was to the study of representations which were very close to being Fuchsian, while here the main source of information is gained from representations which are very far from being Fuchsian.

The above inequality gives a formally cheap proof that this exponential growth rate is always positive, which has been shown using much more sophisticated techniques by Sambarino \cite{SAM12} (Sambarino shows much more than the positivity, he shows that this growth rate is \textit{exactly} exponential).  Perhaps the most interesting consequence of this is the following: if the exponential growth rate goes to zero along a sequence of Hitchin representations, then the corresponding equivariant minimal surfaces in $G/K$ become increasingly flat.  Recent work by Katzarkov-Noll-Pandit-Simpson \cite{KNPS13} aims to identify a limiting harmonic map in this situation from $\widetilde{\Sigma}$ to a "universal building."  The notion that the approximating equivariant minimal surfaces are getting more and more flat seems congruent with the picture developed in \cite{KNPS13}.  Additionally, recent work of Collier-Li \cite{CL14} has also exposed this asymptotic flatness phenomenon.

As a last application, for each choice of marked complex structure $\sigma$ on $\Sigma,$ we define a map
\begin{align}
\mathfrak{U}_{\sigma}:\text{Hit}(G)\rightarrow \mathcal{M}(\Sigma),
\end{align}
 from the space of Hitchin representations $\text{Hit}(G)$ to the deformation space of Riemannian metrics $\mathcal{M}(\Sigma)$ on the surface $\Sigma,$ simply by pulling back the Riemannian metric of the symmetric space through the corresponding equivariant harmonic map.  By construction this map is equivariant under the mapping class group action.  The fact that the harmonic map is always an immersion ensures that the these pull-back tensors \text{are} Riemannian metrics.  Unfortunately, this map has deficits.  Most importantly, the map is \textit{not} injective, the circle action defined in the realm of Higgs bundles often produces harmonic maps whose pull-back metrics are isometric provided the rank of $G$ is greater than one.  We make one final comment that the $L^2$-metric on the deformation space of Riemannian metrics $\mathcal{M}(\Sigma)$ equips this pictures with some Riemannian geometry.  In addition, it is well-known that the restriction of the $L^2$-metric to the Fuchsian representations coincides with the Weil-Petersson metric under the identification of the space of Fuchsian representations with the Teichm\"{u}ller space of isotopy classes of complex structures on $\Sigma.$  We hope to study this geometry in a future paper.

\section{Acknowledgements}

This paper hugely benefitted from conversations with many mathematicians over the course of many years.  I am particularly grateful to Richard Wentworth and William Goldman for spending years teaching me about the theory of Higgs bundles.  Additionally, this paper would not have come to fruition without numerous conversations with Brian Collier, who helped me tremendously in understanding much of the Lie theory relevant to Hitchin representations.

\section{Background on Harmonic maps}\label{harmmaps}
Throughout, $\Sigma$ is a smooth, connected, oriented surface.  For the general (local) discussion below, we do not require $\Sigma$ to be closed, although later in the paper this will be the case.

This exposition draws on heavily on the discussion found in \cite{MOO06}.  Let $X=(\Sigma,\sigma)$ be a Riemann surface (here $\sigma$ is a complex structure) and $(M,h)$ a Riemannian manifold.  Given any $C^{1}$-mapping $f:X\rightarrow M,$ the Dirichlet energy is defined via,
\begin{align}
E(f)=\frac{1}{2} \int_{\Sigma} \lVert df \rVert^2 \ dV_{g},
\end{align}
where the volume element is computed using any conformal metric $g$ on $X$ and the Hilbert-Schmidt norm $\lVert df \rVert^2$ is given in local coordinates by the expression,
\begin{align}
\lVert df \rVert^2=g^{ij}\partial_{i}f^{\alpha}\partial_{j}f^{\beta}h_{\alpha \beta}(f).
\end{align}
It is straightforward to check that the integrand $\lVert df \rVert^2 \ dV_{g}$ is invariant under conformal changes of the metric $g\rightarrow e^{2u}g$ for any $u\in C^{\infty}(\Sigma).$  
\begin{definition}
A $C^{1}$-mapping $f:X\rightarrow M$ is harmonic if it is a critical point of the Dirichlet energy with respect to all compactly supported variations of $f.$
\end{definition}

In this paper, all harmonic maps we meet will be smooth and we shall assume this henceforth.

Note that harmonicity is a well defined notion for maps from the Riemann surface $X$ since the Dirichlet energy is conformally invariant.  For our purposes, it will be important to understand the holomorphic implications of harmonicity.  Suppose $f:X\rightarrow M$ is a harmonic map and consider the differential,
\begin{align}
df: TX \rightarrow f^{*}(TM).
\end{align}
Complexifying the differential and projecting onto the $(1,0)$-part using the complex structure of $X$ defines a map,
\begin{align}
\partial f: K^{-1}\rightarrow f^{*}(TM)\otimes \mathbb{C},
\end{align}
where $K$ is the canonical bundle of holomorphic $1$-forms on $X.$

The Levi-Civita connection of $h$ defines a connection on the pullback bundle $f^{*}(TM).$  The complex linear extension of this connection equips $E=f^{*}(TM)\otimes \mathbb{C}$ with a holomorphic structure by restricting to the $(0,1)$-part (here we use the fact that Cauchy-Riemann operators on complex vector bundles over Riemann surfaces are trivially integrable due to the lack of $(0,2)$ forms on $X).$  This is commonly called the Koszul-Malgrange holomorphic structure.  Globally, we may consider $\partial f$ as a smooth section of the holomorphic vector bundle,
\begin{align}
\text{End}(K^{-1}, E)\simeq K\otimes E,
\end{align}
where $K^{-1}$ has the natural complex structure of the holomorphic tangent bundle to $X.$  The following proposition is essential (see \cite{HW08}).
\begin{proposition}
If a map $f:X\rightarrow M$ is harmonic, then $\partial f$ is a holomorphic section of $\text{\textnormal{End}}(K^{-1},E).$  
\end{proposition}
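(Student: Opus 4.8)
The plan is to check holomorphicity in a local holomorphic coordinate, which suffices since the holomorphic structure is built from a globally defined operator and, by the conformal invariance of the data, the computation is insensitive to the choice of conformal metric. Fix a holomorphic coordinate $z$ on an open subset of $X$, with complex coordinate vector fields $\partial_{z}, \partial_{\bar z}$ and holomorphic frame $dz$ for $K$. Writing $f_{z} := \partial_{z} f$ and $f_{\bar z} := \partial_{\bar z} f$ for the corresponding local sections of $E = f^{*}(TM)\otimes\mathbb{C}$, the $(1,0)$-part of the differential is $\partial f = f_{z}\otimes dz$, viewed as a section of $K\otimes E$.

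Next I would unwind what holomorphicity means. By construction the Koszul--Malgrange operator $\bar\partial^{E}$ is the $(0,1)$-part of the complexified pullback Levi-Civita connection $\nabla$ on $E$, so a local section $s$ is holomorphic exactly when $\nabla_{\partial_{\bar z}}s = 0$. Since $dz$ is a holomorphic frame for $K$, the induced operator on $K\otimes E$ gives
\begin{align}
\bar\partial(\partial f) = \left(\nabla_{\partial_{\bar z}} f_{z}\right)\, d\bar z\otimes dz,
\end{align}
and the proposition reduces to the single identity $\nabla_{\partial_{\bar z}} f_{z} = 0$.

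To prove this identity, the key step is the symmetry of the generalized Hessian. Because the Levi-Civita connection on $(M,h)$ is torsion-free and the coordinate fields commute, $[\partial_{z},\partial_{\bar z}] = 0$, its complex-linear extension to the pullback bundle satisfies
\begin{align}
\nabla_{\partial_{\bar z}} f_{z} = \nabla_{\partial_{z}} f_{\bar z}.
\end{align}
I would then identify the right-hand side with the tension field: writing out $\tau(f) = \operatorname{tr}_{g}\nabla df = 0$ in the conformal coordinate $z$ shows that, up to a nonvanishing conformal factor which drops out, harmonicity is precisely the equation $\nabla_{\partial_{z}} f_{\bar z} = 0$. Combining the two displays yields $\nabla_{\partial_{\bar z}} f_{z} = 0$, hence $\bar\partial(\partial f) = 0$, which is the claim.

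The only substantive content is this symmetry step, which is exactly what converts the antiholomorphic covariant derivative of the $(1,0)$-differential into the (vanishing) tension field. Accordingly, I expect no genuine analytic obstacle; the main care is bookkeeping the complexification — verifying that the torsion-free identity, established for real vector fields, extends complex-linearly to $\partial_{z},\partial_{\bar z}$, and that the $(0,1)$-part of the pulled-back Levi-Civita connection is indeed the holomorphic structure against which we test. Once the conventions are pinned down, the proposition is a coordinate repackaging of the harmonic map equation.
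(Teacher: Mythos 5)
Your proof is correct and is exactly the argument the paper has in mind: the paper itself gives no proof, deferring to \cite{ES64}, \cite{HW08} and remarking that holomorphicity of $\partial f$ ``is just a particular form of the Euler--Lagrange equations,'' which is precisely what you verify by reducing holomorphicity to $\nabla_{\partial_{\bar z}}f_z=0$ and identifying this, via the symmetry of $\nabla df$ and the conformal-coordinate form of the tension field, with $\tau(f)=0$. No gaps; the bookkeeping points you flag (complex-linear extension of torsion-freeness, the Koszul--Malgrange $(0,1)$-operator) are handled consistently with the paper's conventions.
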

We remark that the equation which encodes the holomorphicity of $\partial f$ is just a particular form of the Euler-Lagrange equations for the Dirichlet energy.  We refer to reader to the articles \cite{ES64}, \cite{HW08} for a detailed account of the variational description of harmonic maps.

Following standard notation, we shall write $\partial f\in \text{\textnormal{H}}^{0}(\text{\textnormal{End}}(K^{-1}, E)).$
Since $\partial f: K^{-1}\rightarrow E$ is a holomorphic map and $K^{-1}$ is a line bundle over a Riemann surface, basic Riemann surface theory implies that either $\partial f$ is identically zero, or there exists a discrete collection of points $\{ p_i\}$ (the \textit{branch} points) where $\partial f$ vanishes.  For the rest of the discussion we will assume that $X$ is compact.  Choosing a local isothermal coordinate $z_i$ satisfying $z_i(p_i)=0,$ there exists $n_i$ positive integers such that
\begin{align}\label{branch}
\frac{\partial{f}}{\partial z_i}=z_i^{n_i} g_i(z_i),
\end{align}
where $g_i$ is a non-vanishing holomophic function on the coordinate chart.  Away from the $p_i,$ the derivative
\begin{align}
\frac{\partial f}{\partial z},
\end{align}
defines a holomorphic section of the projective bundle
\begin{align}
\mathbb{P}(E).
\end{align}
This defines a complex line in $E$ over the complement of the $\{p_i\}.$  Furthermore, the expression \eqref{branch} implies that we may extend this choice of complex lines over the branch points and obtain a holomorphic sub-bundle $L\subset E$ which by construction is isomporphic to
\begin{align}
L\simeq K^{-1}\otimes \mathcal{O}(D),
\end{align}
where $D$ is an effective divisor given by the expression $D=\sum n_i[p_i].$  In terms of the map $f,$ the branch points $\{p_i\}$ are the points where the differential $df$ vanishes.  There may be points where the differential has rank $1,$ but these may also be understood via this holomorphic picture.  Let $z=x+iy$ be a local isothermal coordinate.  Then a local holomorphic frame of $K^{-1}$ is given by
\begin{align}
\frac{\partial}{\partial z}=\frac{1}{2}\left(\frac{\partial}{\partial x}-i\frac{\partial}{\partial y}\right).
\end{align}
Let $\Psi: TX \rightarrow K^{-1}$ be the holomorphic isomorphism defined via,
\begin{align}
\Psi\left(\frac{\partial}{\partial x}\right)=\frac{\partial}{\partial z},
\end{align}
and
\begin{align}
\Psi\left(\frac{\partial}{\partial y}\right)=i \frac{\partial}{\partial z}.
\end{align}
Lastly, if $\iota: f^{*}(TM)\rightarrow f^{*}(TM)\otimes \mathbb{C}$ is the natural inclusion, then the following identities hold:
\begin{align}
\iota\circ df\left(\frac{\partial}{\partial x}\right)=2\text{\textnormal{Re}}\left(\partial f\circ \Psi\left(\frac{\partial}{\partial x}\right)\right),
\end{align}
and
\begin{align}
\iota\circ df\left(\frac{\partial}{\partial y}\right)=2\text{\textnormal{Re}}\left(\partial f\circ \Psi\left(\frac{\partial}{\partial y}\right)\right).
\end{align}
Putting the previous discussion together, and noting that every non-zero tangent vector appears as some isothermal coordinate vector, we deduce the following.
\begin{proposition}\label{immersion}
A harmonic map $f:X\rightarrow M$ has no branch points if and only if $\partial f$ is injective if and only if $L\simeq K^{-1}.$  Furthermore, $f$ is an immersion if and only if for all choices of isothermal coordinates,
\begin{align}
\iota\circ df\left(\frac{\partial}{\partial x}\right)=2\text{\textnormal{Re}}\left(\partial f\circ \Psi\left(\frac{\partial}{\partial x}\right)\right),
\end{align}
and,
\begin{align}
\iota\circ df\left(\frac{\partial}{\partial y}\right)=2\text{\textnormal{Re}}\left(\partial f\circ \Psi\left(\frac{\partial}{\partial y}\right)\right),
\end{align}
are never zero.
\end{proposition}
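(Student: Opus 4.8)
The plan is to dispatch the two assertions in turn. For the triple equivalence I would start from the fact, recorded just above, that the branch points are exactly the zeros of the holomorphic section $\partial f\colon K^{-1}\to E$. Because $K^{-1}$ is a line bundle, the fiber map $(\partial f)_p$ is injective precisely when it is nonzero, so injectivity of $\partial f$ is synonymous with the absence of branch points. To bring in $L$, I would use the identification $L\simeq K^{-1}\otimes\mathcal{O}(D)$ with $D=\sum_i n_i[p_i]$ effective: then $L\simeq K^{-1}$ is equivalent to $\mathcal{O}(D)\simeq\mathcal{O}$, and since $X$ is compact and $D$ is effective this forces $D=0$ (an effective divisor of degree zero vanishes), which is once more the statement that there are no branch points.

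For the immersion criterion I would begin from the elementary observation that $f$ is an immersion exactly when $df_p$ is injective for every $p$, i.e.\ when $df_p(v)\neq 0$ for every nonzero $v\in T_pX$. I would then exploit the conformal freedom in the choice of isothermal coordinate: a holomorphic change of variable is conformal, so by rotating and rescaling a fixed isothermal coordinate every nonzero tangent vector is realized as a coordinate vector $\partial/\partial x$, as remarked before the statement. Since $\iota$ is injective and $\iota\circ df(\partial/\partial x)=2\,\mathrm{Re}(\partial f\circ\Psi(\partial/\partial x))$, the vanishing of $df(\partial/\partial x)$ is equivalent to the vanishing of the real part on the right, and identically for the $y$-vector. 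Quantifying over all isothermal coordinates then turns the condition $df_p(v)\neq 0$ for all $v\neq 0$ into exactly the stated requirement.

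The point that needs the most care, and which I regard as the heart of the argument rather than a true obstacle, is the universal quantifier over isothermal coordinates. The first part only excludes the zero locus of $\partial f$, whereas an immersion must also avoid the points where $df$ has rank one; at such a point the complex vector $V=\partial f(\partial/\partial z)$ is nonzero while $\mathrm{Re}(V)$ and $\mathrm{Im}(V)$ are linearly dependent over $\mathbb{R}$. A rotation of the coordinate multiplies $V$ by a complex number of modulus one, and in this dependent case one may choose the rotation angle so that the rotated vector has vanishing real part, exhibiting an isothermal coordinate in which $df(\partial/\partial x)=0$. Hence testing a single coordinate cannot detect rank-one points, and it is precisely the demand that the real parts be nonzero for all isothermal coordinates that excludes them; this is what makes the criterion coincide with genuine injectivity of $df$.
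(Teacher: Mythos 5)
Your proof is correct and takes essentially the same route as the paper, which gives no separate argument but simply ``puts the previous discussion together'': the local form \eqref{branch} of the branch points, the identity $L\simeq K^{-1}\otimes\mathcal{O}(D)$ with $D$ effective (so $\mathcal{O}(D)\simeq\mathcal{O}$ forces $D=0$ on the compact surface $X$), the real-part identities for $df$, and the observation that every nonzero tangent vector arises as an isothermal coordinate vector. Your explicit rotation argument at rank-one points---choosing the phase so that $\mathrm{Re}\bigl(e^{i\theta}\partial f(\partial/\partial z)\bigr)=0$ when $\mathrm{Re}(V)$ and $\mathrm{Im}(V)$ are dependent---is a correct and useful sharpening of the paper's passing remark that rank-one points ``may also be understood via this holomorphic picture,'' and it pinpoints exactly why the universal quantifier over isothermal coordinates cannot be dropped.
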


A further holomorphic consequence of harmonicity is the following proposition due to Hopf \cite{HOP54},
\begin{proposition} \label{hopfdiff}
Let $f:X\rightarrow M$ be a harmonic map.  Then the expression
\begin{align}
\alpha=h\left(df\left(\frac{\partial}{\partial x}\right),df\left(\frac{\partial}{\partial x}\right)\right)-h\left(df\left(\frac{\partial}{\partial y}\right),df\left(\frac{\partial}{\partial y}\right)\right) \\ -\ 2ih\left(df\left(\frac{\partial}{\partial x}\right),df\left(\frac{\partial}{\partial y}\right)\right) dz^2,
\end{align}
defines a holomorphic quadratic differential on $X.$  
\end{proposition}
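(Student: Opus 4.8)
The plan is to recognize $\alpha$ as the complex-bilinear self-pairing of the holomorphic object $\partial f$, and then to read off holomorphicity directly from the preceding proposition. I would extend the metric $h$ complex-bilinearly (not Hermitianly) to the complexified bundle $E=f^{*}(TM)\otimes\mathbb{C}$, and over an isothermal chart with coordinate $z=x+iy$ set $\xi:=df\!\left(\frac{\partial}{\partial z}\right)$, a local section of $E$; here $\frac{\partial}{\partial z}=\frac12\left(\frac{\partial}{\partial x}-i\frac{\partial}{\partial y}\right)$ is the holomorphic frame of $K^{-1}$, so that $\xi=\partial f\!\left(\frac{\partial}{\partial z}\right)$. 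Expanding $h(\xi,\xi)$ by complex bilinearity and comparing with the stated formula, I would find that the coefficient of $\alpha$ relative to $dz^{2}$ equals exactly $4\,h(\xi,\xi)$, so that $\alpha=4\,h(\xi,\xi)\,dz^{2}$.

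Next I would check that $\alpha$ is a genuine section of $K^{2}$, i.e. that it transforms correctly under a holomorphic change of coordinate $w=w(z)$: since $\frac{\partial}{\partial w}=\frac{dz}{dw}\frac{\partial}{\partial z}$ and $h$ is bilinear, $h(\xi_{w},\xi_{w})=\left(\frac{dz}{dw}\right)^{2}h(\xi_{z},\xi_{z})$, which is precisely cancelled by $dw^{2}=\left(\frac{dw}{dz}\right)^{2}dz^{2}$, so $\alpha$ is well defined globally. To prove holomorphicity it then suffices to verify $\frac{\partial}{\partial\bar z}h(\xi,\xi)=0$ in each chart. Let $\nabla$ be the complex-linear extension to $E$ of the pulled-back Levi-Civita connection. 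Because the Levi-Civita connection is metric, its complexified pullback is compatible with the bilinear extension of $h$, and the Leibniz rule gives
\begin{align}
\frac{\partial}{\partial\bar z}h(\xi,\xi)=h\!\left(\nabla_{\frac{\partial}{\partial\bar z}}\xi,\xi\right)+h\!\left(\xi,\nabla_{\frac{\partial}{\partial\bar z}}\xi\right)=2\,h\!\left(\nabla_{\frac{\partial}{\partial\bar z}}\xi,\xi\right).
\end{align}
Now the Koszul--Malgrange holomorphic structure on $E$ is by definition $\bar\partial^{E}=\nabla^{0,1}$, and the preceding proposition asserts precisely that harmonicity of $f$ makes $\partial f$ holomorphic for this structure; unwinding this in the frame $\frac{\partial}{\partial z}$ of $K^{-1}$ yields $\nabla_{\frac{\partial}{\partial\bar z}}\xi=0$. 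Hence $\frac{\partial}{\partial\bar z}h(\xi,\xi)=0$, and $\alpha$ is holomorphic.

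The one point demanding care --- and the only place the harmonic map equation enters --- is the identification of $\nabla_{\frac{\partial}{\partial\bar z}}\xi=0$ with holomorphicity of $\partial f$, together with the insistence on the complex-\emph{bilinear} rather than Hermitian extension of $h$. The Hermitian pairing $h(\xi,\bar\xi)$ would reproduce the energy density $\lVert\partial f\rVert^{2}$, which is manifestly not holomorphic; it is exactly the bilinear pairing that is annihilated by $\frac{\partial}{\partial\bar z}$ once one knows $\nabla^{0,1}\xi=0$. I expect the main obstacle, though a routine one, to be this compatibility bookkeeping: confirming that metric parallelism survives complexification and that the $(0,1)$-part of $\nabla$ matches the Koszul--Malgrange $\bar\partial$-operator. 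With that in hand, the transformation check and the Leibniz computation are purely formal.
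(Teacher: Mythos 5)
Your proof is correct. Note that the paper itself offers no proof of this proposition --- it is quoted as a classical result of Hopf \cite{HOP54} --- so there is no internal argument to compare against; what you have written is the standard proof, and it uses exactly the machinery the paper sets up in Section \ref{harmmaps}. Specifically: the identification $\alpha=4\,h(\xi,\xi)\,dz^{2}$ with $\xi=\partial f\left(\frac{\partial}{\partial z}\right)$ is right (the complex-bilinear expansion reproduces the stated coefficient), the coordinate-change computation shows $\alpha$ is a global smooth section of $K^{2}$, and the single analytic input --- $\nabla_{\frac{\partial}{\partial\bar z}}\xi=0$ --- is precisely the paper's preceding proposition on the holomorphicity of $\partial f$ for the Koszul--Malgrange structure, correctly unwound in the holomorphic frame $\frac{\partial}{\partial z}$ of $K^{-1}$. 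Your insistence on the complex-bilinear rather than Hermitian extension of $h$, and on checking that metric compatibility of the pulled-back Levi-Civita connection survives complexification, is exactly where the care is needed, and both points hold as you state them.
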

The harmonic map is \textit{weakly} conformal if and only if $\alpha=0.$  This is equivalent \cite{SU82} to $f:X\rightarrow M$ being a branched minimal immersion.  In particular, the expression for the Hopf differential $\alpha$ makes clear the differential of a weakly conformal harmonic map can only have rank $0$ or rank $2.$  In light of Proposition \ref{immersion}, a minimal surface is without branch points if and only if the $(1,0)$-part of the differential is an injective bundle mapping.

\section{Hitchin Representations}

\subsection{Principal $3$-dimensional subalgebra}
The following is a rapid review of the theory of complex simple Lie algebras, Higgs bundles and Hitchin representations.  See \cite{HIT92} for a thorough treatment.  Fix a complex adjoint simple Lie group G.  Recall that the classical groups are $\text{PSL}_n(\mathbb{C}), \text{PSO}_{2n}(\mathbb{C}),\\ \text{PSO}_{2n+1}(\mathbb{C}),$ and $\text{PSp}_{2n}(\mathbb{C})).$  If $\mathfrak{g}$ is the Lie algebra of $G,$ Lie theory establishes the existence of a $3$-dimensional Lie subalgebra $\mathfrak{sl}(2,\mathbb{C}) \subset \mathfrak{g}$ called the principal $3$-dimensional subalgebra, which is a copy of the Lie algebra of $\text{SL}_{2}(\mathbb{C})$ inside $\mathfrak{g}.$  Exponentiating one obtains an irreducible representation,
\begin{align}
\text{\textnormal{PSL}}_{2}(\mathbb{C})\rightarrow G.
\end{align}
The principal $3$-dimensional subalgebra is generated by elements $x, e_{-1}, e_{1}\in \mathfrak{g}$ satisfying the relations,
\begin{align}
[x,e_{-1}]&=-e_{-1}, \\
[x,e_{1}]&=e_{1}, \\
[e_{-1},e_{1}]&=x.
\end{align}
If $\ell\geq 1$ is the rank of $\mathfrak{g},$ then there exist an additional $\ell -1$ elements $e_2,...,e_{\ell}\in \mathfrak{g}$ which, together with $e_1,$ span the centralizer of the adjoint action of $e_{1}$ on $\mathfrak{g}.$  These $e_i$ are also the highest weight vectors for the adjoint action of the principal $3$-dimensional subalgebra on $\mathfrak{g}.$  Restricting to the adjoint action of the semi-simple element $x$ yields an eigen-decomposition:
\begin{align}\label{weightdecomp}
\mathfrak{g}=\bigoplus_{i=-M}^{M}\mathfrak{g}_i,
\end{align}
with $e_i\in \mathfrak{g_{m_i}}.$  The numbers $1=m_1<m_2<....<m_{\ell}=M$ are the exponents of $\mathfrak{g}.$ 

\subsection{Higgs bundles}
Let $G$ be a complex semi-simple Lie group with maximal compact subgroup $K\subset G,$ and $X$ a compact Riemann surface of genus at least two.
\begin{definition}
A $G$-Higgs bundle is a triple $(P,A,\phi)$ where $P$ is a principal $K$-bundle over $X,\ A$ is a connection on $P,$ and $\phi$ (the \textit{Higgs} field) is a holomorphic section of the twisted complex adjoint bundle $K\otimes(\text{\textnormal{Ad}}(P)\otimes \mathbb{C}).$
\end{definition}
Above,
\begin{align}
\text{\textnormal{Ad}}(P)=P\times_{Ad(K)} \mathfrak{k},
\end{align}
denotes the adjoint bundle of Lie algebras with $\mathfrak{k}$ denoting the Lie algebra of $K.$  The holomorphic structure on $\text{Ad}(P)\otimes \mathbb{C}$ is that induced from the $K$-connection $A$ on $P.$  Bracket with $\phi$ defines an endomorphism of $\text{Ad}(P)\otimes \mathbb{C}$ with values in the canonical bundle $K,$ yielding the confluent definition.
\begin{definition}
A Higgs (vector) bundle is a pair $(E,\phi)$ where $E$ is a holomorphic vector bundle over $X$ and the Higgs field $\phi$ is a holomorphic section of $K\otimes \text{\textnormal{End}}(E).$
\end{definition}
A Higgs (vector) bundle $(E,\phi)$ is said to be stable if and only if for every $\phi$-invariant, holomorphic sub-bundle $F\subset E,$ the following inequality is satisfied:
\begin{align}
\frac{\text{deg}(F)}{\text{rk}(F)}<\frac{\text{deg}(E)}{\text{rk}(E)},
\end{align}
where the numerator is the degree of the holomorphic bundle and the denominator is the rank.  We note that there is a more subtle definition of a stable $G$-Higgs bundle, but what we have given here is suitable for our purposes.  A Higgs bundle is poly-stable if it is a direct sum of stable Higgs bundles.  Since this will be the case in our applications, we assume from here forward that the vector bundle $\text{Ad}(P)\otimes \mathbb{C}$ is degree zero.  

The Hitchin-Kobayashi correspondence (due in this case to Hitchin \cite{HIT87} and Simpson \cite{SIM92}) relates poly-stable Higgs bundles to solutions of an important gauge-theoretic system of equations called the self-duality equations.  Suppose we are given a holomorphic structure on $\text{Ad}(P)\otimes \mathbb{C}$ and a holomorphic section $\phi\in K\otimes (\text{Ad}(P)\otimes \mathbb{C})$ acting as a $K$-valued endomorphism via the Lie bracket.  
If this pair is poly-stable in the above sense, then there exists a unique Hermitian metric $H$ on $\text{Ad}(P)\otimes \mathbb{C}$ yielding Chern connection $\nabla^{H}$ such that
the self-duality equations are satisfied:
\begin{align}\label{SD equations}
F(\nabla^{H})+[\phi,\phi^{*_H}]=0, \\
\overline{\partial}_{H}\phi=0.
\end{align}
In the first equation, $F$ is the curvature of the Chern connection and the adjoint $\phi^{*_H}$ is computed using the Hermitian metric $H.$  The second equation is immediate since $\phi$ was assumed holomorphic and the Chern connection induces the existing holomorphic sturcture.  From \eqref{SD equations}, a short computation reveals that the $G$-connection
\begin{align}
B=\nabla^{H}+\phi+\phi^{*_{H}},
\end{align}
is flat.  Thus, the holonomy of this flat connection defines a representation
\begin{align}
\rho:\pi_1(\Sigma)\rightarrow G,
\end{align}
which is reductive, namely that the induced action on the Lie algebra of $G$ is completely reducible.

Conversely, start with a flat principal $G$-bundle $(P,B)$ where $B$ is the flat connection over the Riemann surface $X.$  Since $G/K$ is contractible, we may reduce the structure group from $G$ to $K$ which correspondingly yields a decomposition of the flat connection, 
\begin{align}
B=A+\Psi,
\end{align}
where $A$ is a $K$-connection and $\Psi$ is a $1$-form with values in the adjoint bundle.
Furthermore, we may split $\Psi$ into $(1,0)$ and $(0,1)$ parts,
\begin{align} \label{phidefine}
\Psi=\phi+\phi^{*_H}.
\end{align}
The first equation from \eqref{SD equations} is automatic from the flatness of $B,$ whereas the second equation requires that the reduction of structure group is of a special type.  The flat $G$-bundle can be reconstructed as a fibered product of the universal cover $\widetilde{X}$ and $G,$
\begin{align}
P=\widetilde{X}\times_{\rho} G,
\end{align}
where $\rho$ is the monodromy of $B,$ and the reduction of structure group to $K$ is equivalent to a $\rho$-equivariant map
\begin{align}
f: \widetilde{X}\rightarrow G/K.
\end{align}
Unwinding the definitions reveals that the second equation of \eqref{SD equations} is satisfied if and only if the $\rho$-equivariant map is harmonic in the Riemannian geometric sense discussed earlier in this paper.  The $(1,0)$-part of the differential of $f$ is, after a suitable identification, the section $\phi$ defined in \eqref{phidefine}.  A theorem of Corlette \cite{COR88}, due originally in the $\text{PSL}_2(\mathbb{C})$ case to Donaldson \cite{DON87}, states that such an equivariant harmonic map exists if and only if the connection $B$ is reductive, namely that any $B$-invariant sub-bundle of $P$ has a $B$-invariant complement.  A vanishing theorem shows that the corresponding Higgs bundle with the holomorphic structure being given by the connection $A$ is poly-stable, and furthermore stable if and only if the flat connection $B$ is irreducible.  

The group $G$ is acting by conjugation on the set of reductive representations $\pi_1(\Sigma)\rightarrow G,$ which translates to an action on Higgs bundles via bundle automorphisms.  The above reversible constructions yield a homeomorphism between the space of conjugacy classes of reductive representations $\pi_1(\Sigma)\rightarrow G,$ and the space of gauge equivalence classes of poly-stable Higgs bundles; this homeomorphism is know as the \textit{non-abelian Hodge correspondence} since it is a higher rank analogue of Hodge theory for holomorphic line bundles over X (see \cite{GX08}).

\subsection{Hitchin representations}
As detailed above, Higgs bundles can be seen as a machine for producing representations $\pi(\Sigma)\rightarrow G.$  Hitchin used this technique to isolate an important component of representations $\pi_1(\Sigma)\rightarrow G^{r}$ where $G^{r}$ is the split real form of a complex adjoint simple Lie group $G$ of rank $\ell.$  The split real forms of the classical linear groups consist of $\text{PSL}_n(\mathbb{R}), \newline \text{PSp}_{2n}(\mathbb{R}), \text{PSO}(n,n),$ and $\text{PSO}(n,n+1).$  Generally, there is a unique (up to the adjoint action) real form of $\mathfrak{g}$ for which the Killing form has maximal index and the exponentiation of this sub-algebra is the split real form. We quickly review Hitchin's construction.  Recall the weight decomposition \eqref{weightdecomp} of the Lie algebra of $G,$
\begin{align} 
\mathfrak{g}=\bigoplus_{i=-M}^{M}\mathfrak{g}_i.
\end{align}

  Consider the holomorphic vector bundle,
\begin{align}
E=\bigoplus_{i=-M}^{M} \underline{\mathfrak{g}_i}\otimes K^i.
\end{align}
Here, $\underline{\mathfrak{g}_i}$ notates the trivial vector bundle over the closed Riemann surface $X$ with fiber $\mathfrak{g}_i$ and $K$ is the canonical bundle of holomorphic $1$-forms.  Note that this is a holomorphic vector bundle with structure group $G.$  Next, consider the expression,
\begin{align}
\phi=e_{-1}+\sum_{k=1}^{\ell} \alpha_k \otimes e_k,
\end{align}
where $\alpha_k$ is a holomorphic section of $K^{m_k +1}.$  Firstly, $\phi$ is evidently a holomorphic section of $K\otimes E.$  Next, $\phi$ acts as an endomorphism with values in $K$ by combining the tensor product of holomorphic differentials with the Lie bracket on $\mathfrak{g}.$  As observed by Hitchin \cite{HIT92}, the bracket operation is holomorphic, thus we get a Higgs (vector) bundle $(E,\phi),$ with $\phi\in H^0(K\otimes \text{End}(E)).$  Hitchin \cite{HIT92} showed that these Higgs bundles are all stable, and hence by the non-abelian Hodge correspondence yield irreducible representations of the fundamental group $\pi_1(\Sigma)\rightarrow G,$ which moreover actually take values in the split real form $G^r$ of $G.$  Furthermore, these representations are parameterized by the \textit{Hitchin base} $\bigoplus_{i=1}^{\ell}K^{m_i+1}.$  The totality of all such representations forms a component of the space of conjugacy classes of representations $\pi_1(\Sigma)\rightarrow G^{r},$ which is called the Hitchin component.  We will denote the Hitchin component by $\text{Hit}(G).$  For us, the useful thing is that $\phi$ is an expression for the $(1,0)$-part of the differential of the corresponding equivariant harmonic map which is known to exist by the theorem of Corlette \cite{COR88}.  Therefore, we may use the discussion in section \ref{harmmaps} to obtain properties of the differential of those harmonic maps.

\section{Hitchin harmonic maps are immersions}

Let $\rho\in \text{Hit}(G)$ for $G$ a complex adjoint simple Lie group of rank $\ell.$  Let $X$ be a compact Riemann surface of genus greater than one with universal cover $\widetilde{X}.$  Then by Corlette \cite{COR88}, there exists a unique $\rho$-equivariant harmonic map
\begin{align}
f_{\rho}:\widetilde{X}\rightarrow G/K.
\end{align}
We now review the relationship between the adjoint bundle description in the previous section, and the tangent bundle of the symmetric space $G/K.$  From $\rho,$ we first construct the flat principal $G$-bundle
\begin{align}
P=\widetilde{X}\times_{\rho} G.
\end{align}
The harmonic map $f_{\rho}$ is the pullback under the universal covering map $p:\widetilde{X}\rightarrow X$ of a \textit{harmonic} section $s_{\rho}$ of the associated bundle of symmetric spaces,
\begin{align}
P\times_{G} G/K \simeq \widetilde{X}\times_{\rho} G/K\xrightarrow{\pi} X.
\end{align}
This section defines a reduction of structure group of $P$ from $G$ to $K$ and hence gives a principal $K$-bundle $P_K$ equipped with a $K$-connection, the latter being the pullback of the part of the flat connection taking values in the Lie algebra of $K.$

Next, taking derivatives yields a sequence of bundles,
\begin{align}
0\rightarrow \mathcal{V}=\text{ker}(d\pi)\rightarrow T(\widetilde{X}\times_{\rho} G/K)\rightarrow \widetilde{X}\times_{\rho} G/K \rightarrow 0.
\end{align}
Pulling back the bundle $\mathcal{V}$ to a bundle over $X$ using the section $s_{\rho}$ gives
\begin{align}
s_{\rho}^{*}(\mathcal{V})\rightarrow X.
\end{align}
We note that $\mathcal{V}$ is a bundle with fiber given by the fibers of the tangent bundle $T(G/K)$ to the symmetric space $G/K$ which is also equipped with a $K$-connection.  Recalling the Cartan decomposition $\mathfrak{g}=\mathfrak{k}\oplus \mathfrak{p},$ this tangent bundle is $G$-equivariantly,
\begin{align}
T(G/K)\simeq G\times_{Ad(K)} \mathfrak{p}.
\end{align}
On the other hand, the adjoint bundle
\begin{align}
Ad(P_K)\rightarrow X,
\end{align}
is a bundle with fiber $\mathfrak{k}.$  Complexifying yields an isomorphism,
\begin{align}
s_{\rho}^{*}(\mathcal{V})\otimes \mathbb{C}\simeq Ad(P_K)\otimes \mathbb{C},
\end{align}
as bundles equipped with connection, and hence as holomorphic bundles over $X.$  Lastly, we observe that the complexified adjoint bundle is none other than the bundle $E$ from our Higgs bundle description,
\begin{align}
Ad(P_K)\otimes \mathbb{C}\simeq  E=\bigoplus_{i=-M}^{M} \underline{\mathfrak{g}_i}\otimes K^i.
\end{align}

Now, we relate this to the equivariant harmonic map $f_{\rho}.$  The tangent bundle $T(G/K)$ is an equivariant vector bundle over $G/K$ for the action of $G$ by left translation.  Since $f_{\rho}$ is $\rho$-equivariant, the pull-back bundle $f_{\rho}^{*}(T(G/K))\otimes \mathbb{C}$ becomes a $\pi_1(X)$-equivariant vector bundle over $\widetilde{X}.$  This yields the identification,
\begin{align}\label{equi-iso}
\pi_1(X)\backslash (f_{\rho}^{*}(T(G/K))\otimes \mathbb{C})\simeq s_{\rho}^{*}(\mathcal{V})\otimes \mathbb{C}.
\end{align}
Remarking that all of the connections present on the bundles pullback as well, which induce holomorphic structures, the following proposition holds.
\begin{proposition}\label{vbiso}
There is an isomorphism,
\begin{align}
\beta: \pi_1(X)\backslash (f_{\rho}^{*}(T(G/K))\otimes \mathbb{C})\simeq Ad(P_{K})\otimes\mathbb{C}\simeq E=\bigoplus_{i=-M}^{M} \underline{\mathfrak{g}_i}\otimes K^i,
\end{align}
as holomorphic vector bundles.
\end{proposition}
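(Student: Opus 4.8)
The plan is to assemble the stated isomorphism $\beta$ from the chain of identifications already set up in this section, and then to upgrade each link from a smooth (or connection-preserving) isomorphism to a holomorphic one. First I would pin down the descent isomorphism \eqref{equi-iso}. The $\rho$-equivariant map $f_\rho$ and the harmonic section $s_\rho$ are two incarnations of the same datum, related by the covering map $p:\widetilde{X}\to X$. Because $T(G/K)$ is a $G$-equivariant bundle over $G/K$, the $\rho$-equivariance of $f_\rho$ endows $f_\rho^{*}(T(G/K))\otimes\mathbb{C}$ with a $\pi_1(X)$-action covering the deck action on $\widetilde{X}$, and the quotient is canonically $s_\rho^{*}(\mathcal V)\otimes\mathbb{C}$, where $\mathcal V=\ker(d\pi)$ has fiber over $s_\rho(x)$ equal to the tangent space of the symmetric-space fiber.

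Next I would identify $s_\rho^{*}(\mathcal V)$ with the associated bundle $P_K\times_{\mathrm{Ad}(K)}\mathfrak p$. The section $s_\rho$ is precisely the reduction of structure group from $G$ to $K$ producing $P_K$, and under the $G$-equivariant identification $T(G/K)\simeq G\times_{\mathrm{Ad}(K)}\mathfrak p$ the pullback of the vertical bundle along this reduction is the $\mathfrak p$-associated bundle of $P_K$. Complexifying gives $s_\rho^{*}(\mathcal V)\otimes\mathbb{C}\simeq P_K\times_{\mathrm{Ad}(K)}(\mathfrak p\otimes\mathbb{C})$. Since $G$ is complex the Cartan decomposition reads $\mathfrak p=i\mathfrak k$, so multiplication by $i$ is an $\mathrm{Ad}(K)$-equivariant isomorphism $\mathfrak k\xrightarrow{\sim}\mathfrak p$; equivalently $\mathfrak p\otimes\mathbb{C}$ and $\mathfrak k\otimes\mathbb{C}$ are both isomorphic as $\mathrm{Ad}(K)$-modules to the complex Lie algebra $\mathfrak g$. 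This yields the fiberwise isomorphism $s_\rho^{*}(\mathcal V)\otimes\mathbb{C}\simeq \mathrm{Ad}(P_K)\otimes\mathbb{C}$, and the last identification $\mathrm{Ad}(P_K)\otimes\mathbb{C}\simeq E=\bigoplus_i\underline{\mathfrak g_i}\otimes K^i$ is the weight-space description from the Hitchin construction.

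The real content is to check that this chain is an isomorphism of \emph{holomorphic} bundles, not merely of smooth bundles. Each bundle carries a connection induced from the single flat $G$-connection $B=A+\Psi$ through its $\mathfrak k$-part $A$, and in every case the holomorphic structure is the Koszul-Malgrange structure cut out by the $(0,1)$-part of that connection. It therefore suffices to verify that each link is parallel, since a parallel isomorphism automatically intertwines $(0,1)$-parts and hence holomorphic structures. For the linear-algebra steps this is immediate: the maps $\mathfrak p\xrightarrow{\sim}\mathfrak k$ and the weight-space splitting are $\mathrm{Ad}(K)$-equivariant fixed linear maps, so they induce parallel bundle isomorphisms for the $A$-induced adjoint connections.

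The step I expect to require the most care is matching the connection on $s_\rho^{*}(\mathcal V)$ with the $A$-induced adjoint connection on $P_K\times_{\mathrm{Ad}(K)}\mathfrak p$. The connection on $\mathcal V$ is the one coming from the Levi-Civita connection of the symmetric-space metric, and I would invoke the standard fact that for $G/K$ this agrees with the canonical connection determined by the Cartan decomposition, so that pulling back along the reduction $s_\rho$ reproduces exactly the adjoint connection built from $A$. Granting this, the descent isomorphism \eqref{equi-iso} is parallel because the pullback connection on $f_\rho^{*}(T(G/K))$ is $\rho$-equivariant by construction, and the remaining links are parallel by the equivariance just noted. Composing, $\beta$ is connection-preserving, hence a holomorphic isomorphism.
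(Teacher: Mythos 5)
Your proposal follows essentially the same route as the paper: descend $f_{\rho}^{*}(T(G/K))\otimes\mathbb{C}$ to $s_{\rho}^{*}(\mathcal{V})\otimes\mathbb{C}$ by equivariance, identify it with $Ad(P_K)\otimes\mathbb{C}$ via the Cartan decomposition $T(G/K)\simeq G\times_{Ad(K)}\mathfrak{p}$ and complexification, invoke the Hitchin weight-space description for the last link, and match connections so that the Koszul--Malgrange holomorphic structures agree. The paper compresses the holomorphicity check into a single remark ("all of the connections present on the bundles pull back as well"), whereas you correctly isolate and justify the one nontrivial point --- that the Levi-Civita connection of the symmetric metric coincides with the canonical connection of the Cartan decomposition --- so your write-up is a legitimate filling-in of the same argument rather than a different one.
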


\textbf{Remark:}  At this point, it is unclear to the author if one can easily show directly from properties of the representation and the harmonic map that this \textit{is} the holomorphic structure on the adjoint bundle.  Our current proof which starts with the Higgs bundle and then works through the non-abelian Hodge correspondence certainly leaves something to be desired.

Using this isomorphism we obtain the following commutative diagram:
\begin{center}
\begin{tikzpicture}[column sep=1in,row sep=1in]
    \matrix (A) [matrix of math nodes]
    {
       \partial f_{\rho}: p^{*}(K^{-1})& f_{\rho}^{*}(T(G/K))\otimes \mathbb{C}\\
       \phi: K^{-1}& E,\\
    };
   \draw[->] (A-1-1) -- (A-1-2);
    \draw[->] (A-1-1) -- (A-2-1);
    \draw[->] (A-2-1) -- (A-2-2);
    \draw[->] (A-1-2) -- (A-2-2);
\end{tikzpicture}
\end{center}
where the left vertical arrow is the quotient map under the $\pi_1(X)$-action and the right vertical arrow, which we will denote by $\overline{\beta},$ is the isomorphism $\beta$ of proposition \ref{vbiso} precomposed with the quotient by the $\pi_1(X)$-action.  We remark that the top line consists of $\pi_1(X)$-equivariant holomorphic bundles over $\widetilde{X}$ and the bottom line consists of holomorphic vector bundles over $X.$  Since the vertical arrows are local diffeomorphisms, the mapping properties of $\partial f_{\rho}$ are determined by that of $\phi.$  

By the Hitchin parameterization, 
\begin{align}
\phi=e_{-1}+\sum_{k=1}^{\ell} \alpha_k \otimes e_k,
\end{align}
for some choice of pluri-canonical sections $\alpha_{k}\in \text{H}^{0}(K^{m_k+1})$ where the Higgs bundle $(E,\phi)$ corresponds, via the non-Abelian Hodge correspondence, to $\rho.$  How does $\phi$ act as such a map?  Pick a local holomorphic frame $\frac{\partial}{\partial z}$ of $K^{-1},$ then contract each pluri-canonical section appearing in $\phi$ with this this vector to obtain:
\begin{align}
\phi\left(\frac{\partial}{\partial z}\right)=\frac{\partial}{\partial z} \otimes e_{-1}+\sum_{k=1}^{\ell} \alpha_k\left(\frac{\partial}{\partial z},-\right) \otimes e_k \in E.
\end{align}
Remembering our earlier discussion of harmonic maps from section \ref{harmmaps}, let $\iota:f_{\rho}^{*}(T(G/K))\rightarrow f_{\rho}^{*}(T(G/K))\otimes \mathbb{C}$ be the natural inclusion and $\Psi:p^{*}T\Sigma \rightarrow p^{*}K^{-1}$ the holomorphic identification of the tangent bundle with the holomorphic tangent bundle. Then,
\begin{align}\label{dfdx}
\overline{\beta}\circ\iota\circ df_{\rho}\left(\frac{\partial}{\partial x}\right)&=2\text{\textnormal{Re}}\left(\overline{\beta}\circ\partial f_{\rho}\circ \Psi\left(\frac{\partial}{\partial x}\right)\right)\\
&=2\text{\textnormal{Re}}\left(\phi\left(\frac{\partial}{\partial z}\right)\right),
\end{align}
and,
\begin{align}\label{dfdx'}
\overline{\beta}\circ\iota\circ df_{\rho}\left(\frac{\partial}{\partial y}\right)&=2\text{\textnormal{Re}}\left(\overline{\beta}\circ\partial f_{\rho}\circ \Psi\left(\frac{\partial}{\partial y}\right)\right) \\
&=2\text{\textnormal{Re}}\left(\phi\left(i\frac{\partial}{\partial z}\right)\right).
\end{align}
For any choice of isothermal coordinates, $\phi$ acts according to,
\begin{align}
\text{\textnormal{Re}}\left(\phi\left(\frac{\partial}{\partial z}\right)\right)=\frac{1}{2}\frac{\partial}{\partial x}\otimes e_{-1}+\text{\textnormal{Re}}\left(\sum_{k=1}^{\ell} \alpha_k\left(\frac{\partial}{\partial z},-\right) \otimes e_k\right),
\end{align}
and,
\begin{align}
\text{\textnormal{Re}}\left(\phi\left(i\frac{\partial}{\partial z}\right)\right)=\frac{1}{2}\frac{\partial}{\partial y}\otimes e_{-1}+\text{\textnormal{Re}}\left(\sum_{k=1}^{\ell} \alpha_k\left(i\frac{\partial}{\partial z},-\right) \otimes e_k\right).
\end{align}
Since the sum $E=\bigoplus_{i=-M}^{M} \underline{\mathfrak{g}_i}\otimes K^i$ is direct, the non-vanishing of the $K^{-1}\otimes \underline{\mathfrak{g}_{-1}}$ piece implies that both,
\begin{align}
\text{\textnormal{Re}}\left(\phi\left(\frac{\partial}{\partial z}\right)\right),
\end{align}
and
\begin{align}
\text{\textnormal{Re}}\left(\phi\left(i\frac{\partial}{\partial z}\right)\right),
\end{align}
never vanish. 
Hence, from \eqref{dfdx} and \eqref{dfdx'}, 
\begin{align}
df_{\rho}\left(\frac{\partial}{\partial x}\right),\ df_{\rho}\left(\frac{\partial}{\partial y}\right)\neq 0.
\end{align}
Having seen Proposition \ref{immersion} in action, this proves $df_{\rho}$ is injective.
\begin{theorem}\label{Main}
Let $G$ be a complex adjoint simple Lie group, $\rho\in \text{Hit}(G)$ be a Hitchin representation, and $f_{\rho}:\widetilde{X}\rightarrow G/K$ the corresponding $\rho$-equivariant harmonic map.  Then $f_{\rho}$ is an immersion.
\end{theorem}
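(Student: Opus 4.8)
The plan is to exploit the fact that the analytic work has already been isolated in Proposition~\ref{immersion} and the bundle identification of Proposition~\ref{vbiso}, so that immersiveness reduces to a purely algebraic feature of the Hitchin Higgs field. By Proposition~\ref{immersion}, showing that $f_{\rho}$ is an immersion is equivalent to showing that, for every isothermal coordinate $z=x+iy$, the vectors $\iota\circ df_{\rho}(\partial/\partial x)$ and $\iota\circ df_{\rho}(\partial/\partial y)$ are nonzero; since every nonzero tangent vector arises as some isothermal coordinate vector, this is exactly injectivity of $df_{\rho}$ on each tangent space. So the first step is to transport this nonvanishing question across the commutative diagram: using the holomorphic isomorphism $\overline{\beta}$ of Proposition~\ref{vbiso}, the identities \eqref{dfdx} and \eqref{dfdx'} identify $\mathrm{Re}(\partial f_{\rho}\circ\Psi(\partial/\partial x))$ and $\mathrm{Re}(\partial f_{\rho}\circ\Psi(\partial/\partial y))$ with $\mathrm{Re}(\phi(\partial/\partial z))$ and $\mathrm{Re}(\phi(i\,\partial/\partial z))$, reducing everything to the nonvanishing of the real parts of the Higgs field contracted against the local frame.

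The decisive step is to read that nonvanishing directly off the Hitchin parameterization $\phi=e_{-1}+\sum_{k=1}^{\ell}\alpha_k\otimes e_k$. The essential structural fact is that $e_{-1}$ is a constant, nowhere-vanishing section valued in the lowest weight space $\mathfrak{g}_{-1}$, whereas each $e_k$ lies in a strictly positive weight space $\mathfrak{g}_{m_k}$ with $m_k\geq 1$. Contracting with the nonvanishing local frame $\partial/\partial z$ of $K^{-1}$, the $e_{-1}$ term contributes $\frac{\partial}{\partial z}\otimes e_{-1}$ to the $K^{-1}\otimes\underline{\mathfrak{g}_{-1}}$ summand of $E=\bigoplus_{i}\underline{\mathfrak{g}_i}\otimes K^i$, while each $\alpha_k(\partial/\partial z,-)\otimes e_k$ lands in the summand $K^{m_k}\otimes\underline{\mathfrak{g}_{m_k}}$ of strictly positive index $m_k\geq 1$. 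Because the weight decomposition is a direct sum and $-1\neq m_k$ for every $k$, the $e_{-1}$ contribution occupies a summand disjoint from all the others and can never be cancelled.

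I would then finish by taking real parts in this distinguished summand. In the $i=-1$ piece the real part of $\phi(\partial/\partial z)$ equals $\frac{1}{2}\frac{\partial}{\partial x}\otimes e_{-1}$ and that of $\phi(i\,\partial/\partial z)$ equals $\frac{1}{2}\frac{\partial}{\partial y}\otimes e_{-1}$; since $\partial/\partial x$ and $\partial/\partial y$ are nonvanishing local frames and $e_{-1}\neq 0$, neither real part can vanish. Pulling this back through $\overline{\beta}$ via \eqref{dfdx} and \eqref{dfdx'} yields $df_{\rho}(\partial/\partial x)\neq 0$ and $df_{\rho}(\partial/\partial y)\neq 0$ for every isothermal coordinate, and Proposition~\ref{immersion} then forces $f_{\rho}$ to be an immersion.

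The genuinely substantive input is not this final assembly but the identification in Proposition~\ref{vbiso} that the Koszul--Malgrange holomorphic structure on $f_{\rho}^{*}(T(G/K))\otimes\mathbb{C}$ matches the Higgs bundle $E$ carrying the explicit Hitchin weight decomposition; this is precisely where the non-abelian Hodge correspondence enters, and it is the step I expect to be the main obstacle to a fully self-contained argument, as the author's own remark following Proposition~\ref{vbiso} acknowledges. Once that identification is granted, the immersion property is forced by nothing more than the presence of the $e_{-1}$ term in the lowest graded piece, a uniform feature of \emph{every} Hitchin representation irrespective of the group $G$.
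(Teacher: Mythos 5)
Your proposal is correct and follows essentially the same route as the paper's own proof: the reduction via Proposition \ref{immersion}, the transfer across the isomorphism $\overline{\beta}$ of Proposition \ref{vbiso} using \eqref{dfdx} and \eqref{dfdx'}, and the key observation that the $e_{-1}$ term of the Hitchin Higgs field occupies the $K^{-1}\otimes\underline{\mathfrak{g}_{-1}}$ summand, disjoint from the summands $K^{m_k}\otimes\underline{\mathfrak{g}_{m_k}}$ hit by the $\alpha_k\otimes e_k$ terms, so that the real parts in \eqref{dfdx} and \eqref{dfdx'} never vanish. Your closing assessment that Proposition \ref{vbiso} --- identifying the Koszul--Malgrange holomorphic structure on $f_{\rho}^{*}(T(G/K))\otimes\mathbb{C}$ with the explicitly graded Hitchin bundle $E$ via non-abelian Hodge theory --- is the genuinely substantive input also matches the author's own remark following that proposition.
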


We remark that this theorem follows from the result of Schoen-Yau \cite{SY78} in the case that $G=\text{PSL}_2({\mathbb{C}}).$  Next, if one is only interested in the above statement for those harmonic maps which are also conformal, namely those which are branched minimal immersions, corresponding to $\alpha_1=0,$ the proof simplifies.  For it is immediate that the Higgs field $\phi$ is a non-vanishing map of holomorphic vector bundles.  Then, it follows from the comment following Proposition \ref{hopfdiff} that all such maps are immersions and the more detailed argument given above is unnecessary.  

In rank one, the theorem of Schoen-Yau asserts that the harmonic map is an \textit{embedding}.  It is an interesting problem, which we state as a conjecture here, to explore this in higher rank.

\begin{conjecture}
Let $G$ be a complex adjoint simple Lie group, $\rho\in \text{Hit}(G)$ be a Hitchin representation, and $f_{\rho}:\widetilde{X}\rightarrow G/K$ the corresponding $\rho$-equivariant harmonic map.  Then $f_{\rho}$ is an embedding.
\end{conjecture}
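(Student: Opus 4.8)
The plan is to reduce the embedding problem to a single question of injectivity, and then to attack that question along the whole Hitchin component. First I would observe that properness of $f_{\rho}$ is essentially automatic. Since $\rho\in\text{Hit}(G)$ is discrete, faithful, and purely loxodromic, the group $\Gamma=\rho(\pi_{1}(\Sigma))$ acts freely and properly discontinuously on $G/K$, because a nontrivial loxodromic element of $G$ is non-elliptic and hence has no fixed point in $G/K$. Thus the orbit map $\gamma\mapsto\rho(\gamma)\cdot m_{0}$ is proper, and as $f_{\rho}$ is $\rho$-equivariant over a compact fundamental domain it lies a bounded distance from this orbit map and is therefore proper as well. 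A proper injective immersion into a manifold is automatically an embedding, and $f_{\rho}$ is already an immersion by Theorem \ref{Main}; so the conjecture is \emph{equivalent} to injectivity of $f_{\rho}$.

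Next I would convert the noncompact problem into a compact one. Let $N_{\rho}=\Gamma\backslash(G/K)$, a smooth locally symmetric manifold, and let $\overline{f}_{\rho}\colon X\to N_{\rho}$ denote the descent of $f_{\rho}$, which is an immersion of the \emph{compact} surface $X$ inducing $\rho$ on fundamental groups. If $f_{\rho}(p)=f_{\rho}(q)$ with $p,q$ in the same $\pi_{1}(\Sigma)$-orbit, say $q=\gamma\cdot p$, then $\rho(\gamma)$ would fix $f_{\rho}(p)$, which is impossible for a loxodromic element; hence every failure of injectivity of $f_{\rho}$ descends to a genuine self-intersection of $\overline{f}_{\rho}$, and conversely. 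The entire conjecture is therefore equivalent to showing that the compact immersed surface $\overline{f}_{\rho}(X)\subset N_{\rho}$ has no self-intersections.

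To prove no self-intersection I would run a continuity argument along the connected space $\text{Hit}(G)$, fixing the complex structure $\sigma$ throughout. The embedding locus $\mathcal{E}\subset\text{Hit}(G)$ is nonempty: at the Fuchsian representation uniformizing $\sigma$, the map $f_{\rho}$ is the totally geodesic embedding $\mathbb{H}^{2}\hookrightarrow G/K$ arising from the principal $\mathrm{SL}_{2}$, which is an embedding. Openness of $\mathcal{E}$ should follow because embeddings form a $C^{1}$-open subset of the proper immersions, because $f_{\rho}$ depends smoothly on $\rho$ by elliptic regularity, and because Theorem \ref{Main} guarantees that the immersion property persists, while the Anosov condition—being open—supplies uniform properness on a neighborhood. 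For closedness one would use that on any compact subfamily of $\text{Hit}(G)$ the Anosov constants are uniform (Labourie \cite{LAB06}), yielding uniform quasi-isometric-embedding bounds together with a uniform positive lower bound on the injectivity radius of the immersion $\overline{f}_{\rho}$; such bounds are exactly what prevent two sheets of the surface from colliding in a limit. Since any Hitchin representation is joined to the Fuchsian locus by a compact path, this would force $\mathcal{E}=\text{Hit}(G)$.

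The hard part will be precisely this uniform no-collision estimate, equivalently the direct exclusion of self-intersections, and I expect the soft ingredients above to be insufficient on their own, since the harmonic map is a transcendental object only indirectly tied to $\rho$, as the author's own Remark emphasizes. I anticipate needing two complementary inputs. First, a maximum-principle argument: two sheets of a minimal surface meeting tangentially must coincide by the strong maximum principle, so only transverse collisions need be ruled out. Second, the dynamical boundary data: Hitchin representations are Borel-Anosov, so the associated boundary map $\xi\colon\partial_{\infty}\pi_{1}(\Sigma)=S^{1}\to G/B$ is a continuous, injective, hyperconvex Frenet curve (Labourie \cite{LAB06}), and I would try to show that $f_{\rho}$ extends continuously to $\overline{\widetilde{X}}$ with asymptotics governed by $\xi$, so that distinct ideal endpoints force distinct asymptotic flags and thereby control injectivity at infinity. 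Welding this analytic-to-dynamical dictionary to a quantitative interior estimate is the central obstacle, and it is the reason the statement is posed here only as a conjecture.
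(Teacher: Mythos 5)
The statement you are addressing is posed in the paper as a \emph{conjecture}: the paper offers no proof of it (the author notes only that the rank-one case $G=\mathrm{PSL}_2(\mathbb{C})$ follows from Schoen--Yau), so your proposal must stand on its own. Its preliminary reductions are in fact correct and worth recording: since Hitchin representations are discrete, faithful, and purely loxodromic, $\Gamma=\rho(\pi_1(\Sigma))$ acts freely and properly discontinuously on $G/K$; equivariance over a compact fundamental domain does make $f_{\rho}$ proper; a proper injective immersion is an embedding; and, together with Theorem \ref{Main}, this correctly reduces the conjecture to injectivity of $f_{\rho}$, i.e.\ to excluding self-intersections of the compact immersed surface in $\Gamma\backslash(G/K)$.

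The genuine gap is the open-and-closed argument along $\mathrm{Hit}(G)$, both halves of which rest on a ``uniform no-collision'' estimate that you never produce and ultimately concede you cannot. Closedness fails for soft reasons: a $C^{1}$-limit of embeddings need not be an embedding, since two sheets of the immersed surface can collide in the limit, and nothing you invoke rules this out. The one tool you name to handle tangential contact --- the strong maximum principle for touching minimal surfaces --- is inapplicable on two counts. First, the conjecture concerns the harmonic map for an \emph{arbitrary} marked complex structure $\sigma$; this map is not conformal in general, so its image is not a minimal surface (it is minimal only for the special $\sigma$ furnished by Labourie's theorem, which is the case used in Theorem \ref{entbound}). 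Second, even in the conformal case the image has codimension $\dim(G/K)-2$, which is large in higher rank, and the strong maximum principle for tangential contact is essentially a codimension-one statement; in higher codimension it does not hold without extra hypotheses. The dynamical input fares no better: uniform Anosov constants on compact subsets of $\mathrm{Hit}(G)$ give uniform quasi-isometric-embedding bounds, which only prevent collisions between points that are \emph{far apart} in $\widetilde{X}$, whereas the self-intersections one must exclude occur at bounded distance in the domain, and no quantity you cite bounds the normal separation of such nearby sheets from below. The argument is therefore circular exactly where it matters: the ``uniform injectivity radius of the immersion'' is equivalent to what is being proved along the path, as your final paragraph effectively admits. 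What survives of the proposal is the (correct and useful) equivalence of embedding with injectivity; the conjecture itself remains open.
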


\section{Applications}
In this section, we record some interesting consequences of Theorem \ref{Main}.
\subsection{Volume entropy}
We start this section with a review of some basic submanifold theory.   Suppose $M$ is a complete, Riemannian manifold of dimension at least $3$ and $f:\widetilde{\Sigma}\rightarrow M$ is an immersion.  Let $\nabla$ denote the Levi-Civita connection of $M$ and $\nabla^{T}$ the component of $\nabla$ tangential to the image of $f.$  Then the second fundamental form is the symmetric 2-tensor with values in the normal bundle given by,
\begin{align}
B(X,Y)=\nabla_{X} Y-\nabla_{X}^{T} Y,
\end{align}
where $X, Y\in \Gamma(f^{*}TM)$ are tangent to the image of $f.$  Thus, $B$ is a normal bundle valued symmetric $2$-tensor, which we write in a local trivialization of the normal bundle as $(B_1,...,B_{n-2}).$  Then each $B_i$ is an ordinary real valued $2$-tensor over the chosen trivializing open set.  Denoting by $g$ the induced metric of the immersion $f,$ the immersion is \textit{minimal} if the trace of the $B_i$ with respect to $g$ simultaneously vanish:
\begin{align}
\text{tr}_g{B_i}=H_i=0.
\end{align}
For each $i,\ H_i$ is the i-th mean curvature of the immersion.  Denoting the Riemannian metric on $M$ by angled brackets $\langle \ ,\ \rangle,$ the Gauss equation reads:
\begin{align}
K_g=\text{Sec}(\partial_1, \partial_2)-\langle B(\partial_1,\partial_2), B(\partial_1,\partial_2)\rangle +\langle B(\partial_1,\partial_1), B(\partial_2,\partial_2)\rangle,
\end{align}
where $K_g$ is the sectional curvature of $g$ and $\text{Sec}(\partial_1,\partial_2)$ is the sectional curvature of the two plane spanned by $\{\partial_1,\partial_2\}$ computed in the Riemannian metric of $M$.
Choosing isothermal coordinates on $\widetilde{\Sigma}$ for the metric $g$ and writing the result with respect to an orthonormal framing of the normal bundle, the minimality of $f$ implies,
\begin{align}
B(\partial_1,\partial_1)=-B(\partial_2,\partial_2).
\end{align}
This verifies that the Gauss equation in this setting is,
\begin{align}
K_{g}=\text{Sec}(\partial_1,\partial_2)-\frac{1}{2}\lVert B\rVert_{g}^2.
\end{align}

Next we need to introduce an important inequality due to Manning \cite{MAN81}.  Suppose $(\Sigma,g)$ is a Riemannian surface of genus at least $2.$.  The unit tangent bundle to $\Sigma$ admits a measure $m_{L},$ the \textit{Liouville measure}, which is invariant under the geodesic flow; in a local trivialization $m_{L}$ is a product of Riemannian volume on $\Sigma$ with standard Lebesgue measure on the circle.  We normalize so that $m_{L}$ has unit mass.  To any invariant probability measure on the unit tangent bundle $T^{1}(\Sigma),$ there is an associated non-negative real number $h(m_{L})$ called the measure-theoretic entropy, which is a measure of the dynamical/measure complexity of the geodesic flow with respect to $m_{L}$ (see \cite{HK95} for a discussion of these objects).  In \cite{MAN81}, Manning proved the following inequality.
\begin{theorem}[\cite{MAN81}]\label{thm: manningbound}
Let $(\Sigma,g)$ be a Riemannian surface of non-positive curvature.  Then,
\[\frac{1}{\sqrt{\text{Vol}(g)}}\int_{\Sigma}\sqrt{-K_g}\ dV_{g}\leq h(m_{L}),\]
where $h(m_{L})$ is the measure theoretic entropy of the geodesic flow with respect to Liouville measure.
\end{theorem}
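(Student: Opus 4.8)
The plan is to compute $h(m_L)$ through Pesin's entropy formula and then to extract $\sqrt{-K}$ by a Cauchy--Schwarz argument that plays the Riccati equation against the flow-invariance of $m_L$. First I would set up the geodesic-flow dynamics on the unit tangent bundle $T^1\Sigma$. Since $K\le 0$, along each unit-speed geodesic $\gamma_v$ the perpendicular Jacobi fields possess a well-defined unstable solution $J_u$ (Green/Eberlein theory for nonpositive curvature), and I set $u(v)=(\log J_u)'(0)\ge 0$. Differentiating the Jacobi equation $J_u''+KJ_u=0$ shows that $u$ satisfies the Riccati equation $u'=-u^2-K$ along the flow, and the positive Lyapunov exponent at $v$ is the Birkhoff average $\lambda^+(v)=\lim_{T}\frac{1}{T}\int_0^T u(\phi^s v)\,ds$. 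Because $m_L$ is absolutely continuous and the geodesic flow is $C^{1+\alpha}$, Pesin's entropy formula gives $h(m_L)=\int_{T^1\Sigma}\lambda^+\,dm_L=\int_{T^1\Sigma}u\,dm_L$, the last equality by the Birkhoff ergodic theorem.

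Next I would produce two identities from the invariance of $m_L$. Using that the flow-derivative of $u$ has zero $m_L$-average, the Riccati equation yields $\int u^2\,dm_L=\int(-K)\,dm_L$. Integrating instead the flow-derivative of $\log u$, namely $(\log u)'=(-K)/u-u$, yields $\int \frac{-K}{u}\,dm_L=\int u\,dm_L=h(m_L)$. Then Cauchy--Schwarz finishes the estimate: writing $\sqrt{-K}=\sqrt{(-K)/u}\cdot\sqrt{u}$, one obtains $\int_{T^1\Sigma}\sqrt{-K}\,dm_L\le \bigl(\int \frac{-K}{u}\,dm_L\bigr)^{1/2}\bigl(\int u\,dm_L\bigr)^{1/2}=h(m_L)$. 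Finally, disintegrating $m_L$ over $\Sigma$ — it projects to the normalized area measure $\frac{1}{\mathrm{Vol}(g)}\,dV_g$ and $\sqrt{-K}$ is constant along each fiber — rewrites the left-hand side as $\frac{1}{\mathrm{Vol}(g)}\int_\Sigma\sqrt{-K_g}\,dV_g$, which is the claim. Equality forces $u\propto\sqrt{-K}$, matching the constant-curvature case where the bound is sharp.

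The main obstacle is the regularity and integrability underlying the two invariance identities. In general nonpositive (as opposed to strictly negative) curvature the unstable distribution is only continuous/measurable, so $u$ is merely bounded and measurable; worse, on the locus $\{K=0\}$ the field $u$ may vanish, so $\log u$ and $(-K)/u$ are a priori singular. I would handle this by first establishing the inequality under strict negativity $K<0$, where $u$ is bounded away from $0$ and $\infty$ and every step is legitimate, and then passing to the general case $K\le 0$ by approximation — replacing $-K$ by $-K+\varepsilon$ in the Cauchy--Schwarz step and letting $\varepsilon\to 0$ via dominated convergence. One must also verify carefully that the two derivative relations are genuine statements about $m_L$-averages of Birkhoff time-averages rather than termwise equalities, which is precisely the point at which the invariance of Liouville measure does the essential work.
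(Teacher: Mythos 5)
The paper does not actually prove this statement: it is quoted verbatim from Manning \cite{MAN81}, so your reconstruction can only be measured against Manning's original argument --- and indeed your outline \emph{is} Manning's proof (Pesin's entropy formula $h=\int_{T^1\Sigma}u\,dm_L$, the Riccati equation, the invariance identity for $\int\frac{-K}{u}\,dm_L$, and Cauchy--Schwarz), so the approach is the right one. However, note carefully what your argument delivers: $\frac{1}{\mathrm{Vol}(g)}\int_\Sigma\sqrt{-K_g}\,dV_g\leq h$, where $h$ is the usual measure-theoretic entropy. This is Manning's theorem, and it is exactly the form the paper uses downstream (Theorem \ref{thm: entbound} and the proof of Theorem \ref{entbound}), but it is \emph{not} literally the displayed statement, which has $1/\sqrt{\mathrm{Vol}(g)}$. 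With the standard normalization of entropy the displayed inequality would be false as written: for $K\equiv-1$ the left side is $\sqrt{\mathrm{Vol}(g)}>1=h$. The remark following the theorem (asserting $h(m_L)=\sqrt{\mathrm{Vol}(g)}$ when $K=-1$) shows that the paper's $h(m_L)$ is the scale-invariant quantity $\sqrt{\mathrm{Vol}(g)}\cdot h$; so your final sentence ``which is the claim'' needs this dictionary (multiply your inequality through by $\sqrt{\mathrm{Vol}(g)}$), or else the statement should simply be read with $1/\mathrm{Vol}(g)$. As it stands you assert an identification that does not hold termwise.

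On the technical issue you raise (the locus $\{u=0\}$ in merely nonpositive curvature): your proposed fixes are not the right ones. Replacing $-K$ by $-K+\varepsilon$ does not help, because there is no invariance identity for $\int\frac{-K+\varepsilon}{u}\,dm_L$ and the term $\frac{\varepsilon}{u}$ may fail to be integrable; and reducing first to $K<0$ requires approximating a nonpositively curved metric by negatively curved ones while controlling entropy, which is both unnecessary and delicate. The correct regularization (Manning's) is in the denominator: from $\bigl(\log(u+\varepsilon)\bigr)'=\frac{-u^2-K}{u+\varepsilon}$ and flow-invariance of $m_L$ one gets
\begin{align}
\int_{T^1\Sigma}\frac{-K}{u+\varepsilon}\,dm_L=\int_{T^1\Sigma}\frac{u^2}{u+\varepsilon}\,dm_L\leq\int_{T^1\Sigma}u\,dm_L=h,
\end{align}
and then Cauchy--Schwarz applied to $\sqrt{-K}=\sqrt{\tfrac{-K}{u+\varepsilon}}\cdot\sqrt{u+\varepsilon}$ gives $\int\sqrt{-K}\,dm_L\leq h^{1/2}(h+\varepsilon)^{1/2}$, after which you let $\varepsilon\to0$. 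It is also worth recording the pointwise fact that makes the singular set harmless: since $u\geq0$ solves $u'=-u^2-K$, any zero of $u$ is an interior minimum, so $u'=0$ and hence $K=0$ there; thus $\{u=0\}\subseteq\{K=0\}$ and $\sqrt{-K}$ vanishes wherever the weight degenerates. Finally, a small economy: the identity $\int u^2\,dm_L=\int(-K)\,dm_L$ that you derive is not needed for this lower bound (it is the ingredient in Katok's \emph{upper} bound $h\leq\bigl(\tfrac{1}{\mathrm{Vol}(g)}\int_\Sigma(-K_g)\,dV_g\bigr)^{1/2}$) and can be dropped.
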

\textbf{Remark:} Note that if $K=-1$ in the above formula, the inequality becomes equality: $h(m_L)=\sqrt{\text{Vol}(g)}=\sqrt{-2\pi \chi(\Sigma)}.$

Somewhat earlier, Manning \cite{MAN79} introduced the following important definition.
\begin{definition}
Let $(M,g)$ be a closed, oriented Riemannian manifold.  Let $B_{g}(p,R)$ be the metric ball in the universal cover $\widetilde{M}$ of radius $R$ centered at a basepoint $p\in \widetilde{M}.$  The volume entropy is the quantity,
\[E(g)=\displaystyle\limsup_{R\rightarrow\infty}\frac{\log\lvert B_g(p,R)\rvert}{R},\]
where $\lvert B_g(p,R)\rvert$ is the Riemannian volume of the ball centered at $p$ of radius $R.$
\end{definition}
In \cite{SAN14}, we prove the following simple corollary to Theorem \ref{thm: manningbound}.
\begin{theorem} \label{thm: entbound}
Suppose $(\Sigma,g)$ is a Riemannian surface of genus at least two and $g$ has non-positive sectional curvature $K_g.$ Then the volume entropy satisfies,
\begin{align}
\frac{1}{\text{Vol}(g)}\int_{\Sigma} \sqrt{-K_g}\ dV_{g}\leq E(g).
\end{align}
\end{theorem}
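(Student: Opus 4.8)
The plan is to treat this exactly as advertised, namely as a short corollary of Manning's curvature estimate (Theorem \ref{thm: manningbound}), the only real work being to replace the measure-theoretic entropy $h(m_L)$ appearing on the right-hand side there by the volume entropy $E(g)$. Manning's bound already produces
\begin{align}
\frac{1}{\sqrt{\text{Vol}(g)}}\int_{\Sigma}\sqrt{-K_g}\ dV_{g}\leq h(m_{L}),
\end{align}
so it suffices to establish the bridging inequality $h(m_{L})\leq \sqrt{\text{Vol}(g)}\,E(g)$; dividing the displayed inequality through by $\sqrt{\text{Vol}(g)}$ then yields the claimed estimate $\frac{1}{\text{Vol}(g)}\int_{\Sigma}\sqrt{-K_g}\ dV_{g}\leq E(g)$ at once.

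To obtain the bridging inequality I would invoke two classical dynamical facts about the geodesic flow on the unit tangent bundle $T^{1}(\Sigma)$ of a closed non-positively curved surface. First is the variational principle: the measure-theoretic entropy of any flow-invariant probability measure, and in particular of the (unit-mass) Liouville measure $m_L$, is bounded above by the topological entropy $h_{\mathrm{top}}$ of the geodesic flow. Second is Manning's theorem \cite{MAN79}, which identifies, for metrics of non-positive curvature, the topological entropy of the geodesic flow with the volume entropy, $h_{\mathrm{top}}=E(g)$. Chaining these gives the essential comparison between the Liouville entropy and $E(g)$, which after accounting for the normalization (see below) is precisely the bridging inequality needed above.

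The main obstacle is therefore not the dynamics, which is standard, but the careful bookkeeping of the normalization of the Liouville measure: the factor of $\sqrt{\text{Vol}(g)}$ separating the two entropy statements is an artifact of the unit-mass convention used in Theorem \ref{thm: manningbound}, and the genuine content of the bridging inequality, once that factor is stripped away, is exactly the variational-principle comparison $h(m_L)\leq E(g)$ in the scale-free normalization. As a consistency check that no slack is introduced, take $K_g\equiv -1$: then $E(g)=1$, while the remark following Theorem \ref{thm: manningbound} records $h(m_L)=\sqrt{\text{Vol}(g)}$, so the bridging inequality is an equality exactly where Manning's bound is. This confirms that the resulting estimate is sharp for hyperbolic metrics and that the passage from $1/\sqrt{\text{Vol}(g)}$ to $1/\text{Vol}(g)$ is faithful.
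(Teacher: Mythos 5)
Your proof is correct and is essentially the paper's own argument: the paper defers the proof of Theorem \ref{thm: entbound} to \cite{SAN14}, where it is obtained exactly as you propose, by chaining Manning's curvature bound on the Liouville entropy (Theorem \ref{thm: manningbound}) with the variational principle $h(m_L)\le h_{\mathrm{top}}$ and Manning's theorem \cite{MAN79} that topological entropy equals volume entropy for non-positively curved metrics. Your care over the $\sqrt{\text{Vol}(g)}$ factor is also the right instinct and your bookkeeping is consistent: in the standard normalization (where $h(m_L)=1$ for a hyperbolic metric, as forced by the scaling $g\mapsto\lambda^{2}g$ under which every entropy scales like $\lambda^{-1}$) Manning's inequality already reads $\frac{1}{\text{Vol}(g)}\int_{\Sigma}\sqrt{-K_g}\ dV_{g}\le h(m_L)$ and the chain $h(m_L)\le h_{\mathrm{top}}=E(g)$ finishes the proof with no bridging factor at all, while under the convention in which Theorem \ref{thm: manningbound} is stated (with $h(m_L)=\sqrt{\text{Vol}(g)}$ when $K_g\equiv-1$) your bridging inequality $h(m_L)\le\sqrt{\text{Vol}(g)}\,E(g)$ is exactly that same comparison rescaled, as your hyperbolic consistency check confirms.
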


Now, let $\rho:\pi_1(\Sigma)\rightarrow G$ be a Hitchin representation and select a point in the symmetric space $p\in G/K.$  Define a function,
\begin{align}
N_{p,\rho}(R):=|\{\gamma\in \pi_1(\Sigma) \ | \ d(p,\rho(\gamma)(p))\leq R\}|,
\end{align}
where the distance is measured in the symmetric space $G/K.$

\begin{definition}
The volume entropy of $\rho$ based at $p\in G/K$ is,
\begin{align}
h(\rho,p):=\limsup_{R\rightarrow \infty} \frac{\log(N_{p,\rho}(R))}{R}.
\end{align}
\end{definition}

\textbf{Remark:}  It follows readily from the triangle inequality that the volume entropy is independent of the point $p\in G/K,$ hence we will simply refer to \textit{the} volume entropy $h(\rho).$

The following theorem yields a lower bound on $h(\rho).$

\begin{theorem}\label{entbound}
Let $\rho:\pi_{1}(\Sigma)\rightarrow G$ be a Hitchin representation where $G$ is a complex adjoint simple Lie group $G.$  Then there exists,
\begin{align}
f_{\rho}:\widetilde{\Sigma}\rightarrow G/K,
\end{align}  
a $\rho$-equivariant minimal immersion.  Denote by $(g,B)$ the induced metric and second fundamental form of the minimal immersion.
Let $h(\rho)$ be the volume entropy of $\rho.$  Then,
\begin{align}
\frac{1}{\text{\textnormal{Vol}}(g)}\int_{\Sigma}\sqrt{-\text{\textnormal{Sec}}\left(T_{f_{\rho}}(f_{\rho}(\widetilde{\Sigma}))\right)+\frac{1}{2}\lVert B\rVert_{g}^2} \ dV_{g} \leq h(\rho).
\end{align}
\end{theorem}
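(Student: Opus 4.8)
The plan is to reduce the stated inequality to the intrinsic entropy bound of Theorem \ref{thm: entbound} via the Gauss equation, and then to compare the intrinsic volume entropy of the induced metric with the representation's volume entropy $h(\rho)$. First I would establish the existence of the equivariant minimal immersion $f_{\rho}$. Fixing $\rho$ and letting the marked complex structure $\sigma$ range over Teichm\"{u}ller space $\mathcal{T}(\Sigma)$, the energy of the associated $\rho$-equivariant harmonic map defines a function on $\mathcal{T}(\Sigma)$. Since Hitchin representations are reductive (indeed Anosov), this energy functional is proper and hence attains a minimum at some $\sigma_0$; at a critical point the Hopf differential of Proposition \ref{hopfdiff} vanishes, so the harmonic map at $\sigma_0$ is weakly conformal, i.e. a branched minimal immersion. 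Theorem \ref{Main} rules out branch points, so $f_{\rho}$ is a genuine $\rho$-equivariant minimal immersion, with induced metric $g$ and second fundamental form $B$.

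Next I would invoke the Gauss equation recorded above. After choosing isothermal coordinates for $g$ and using the minimality relation $B(\partial_1,\partial_1)=-B(\partial_2,\partial_2)$, the Gauss equation reads $K_{g}=\text{Sec}(\partial_1,\partial_2)-\tfrac{1}{2}\lVert B\rVert_{g}^2$. Consequently the radicand appearing in the theorem is exactly $-K_{g}$, where $K_{g}$ is the sectional curvature of the induced metric $g$ on $\Sigma$. Moreover, since $G/K$ is a symmetric space of noncompact type, its ambient sectional curvature satisfies $\text{Sec}\leq 0$, while $\lVert B\rVert_{g}^2\geq 0$, so the Gauss equation forces $K_{g}\leq 0$. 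Thus $(\Sigma,g)$ is a closed surface of genus at least two with non-positive curvature, and Theorem \ref{thm: entbound} applies to yield
\[
\frac{1}{\text{Vol}(g)}\int_{\Sigma}\sqrt{-K_{g}}\ dV_{g}\leq E(g),
\]
where $E(g)$ is the volume entropy of the Riemannian surface $(\Sigma,g)$.

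The main obstacle, and the final step, is to compare the intrinsic entropy $E(g)$ with the representation entropy $h(\rho)$. The key observation is that an isometric immersion is distance non-increasing on the target: writing $\widetilde{g}$ for the lift of $g$ to $\widetilde{\Sigma}$, one has $d_{G/K}\big(f_{\rho}(\widetilde{x}),f_{\rho}(\widetilde{y})\big)\leq d_{\widetilde{g}}(\widetilde{x},\widetilde{y})$, since the length of the image of any path is at most the length of the path in $(\widetilde{\Sigma},\widetilde{g})$. Choosing the basepoint $p=f_{\rho}(\widetilde{x}_0)\in G/K$ and using $\rho$-equivariance, $\rho(\gamma)\,p=f_{\rho}(\gamma\cdot\widetilde{x}_0)$, so that $d_{G/K}(p,\rho(\gamma)\,p)\leq d_{\widetilde{g}}(\widetilde{x}_0,\gamma\cdot\widetilde{x}_0)$. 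Hence every $\gamma$ counted within radius $R$ by the intrinsic orbit-counting function of the $\pi_1(\Sigma)$-action on $(\widetilde{\Sigma},\widetilde{g})$ is also counted by $N_{p,\rho}(R)$, which gives $E(g)\leq h(\rho)$ once one recalls Manning's identification of the volume entropy of a closed non-positively curved manifold with the exponential growth rate of the deck orbit in the universal cover. Chaining the two inequalities and substituting the Gauss-equation form $-K_{g}=-\text{Sec}(\partial_1,\partial_2)+\tfrac{1}{2}\lVert B\rVert_{g}^2$ for the radicand then produces the claim. I expect this last comparison, in particular the clean identification of $E(g)$ with the orbit growth rate together with the distance-decreasing estimate under the isometric immersion, to be the step that requires the most care.
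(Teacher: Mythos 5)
Your proposal is correct and follows essentially the same route as the paper: existence of an equivariant branched minimal immersion (the paper simply cites Labourie's properness-of-energy theorem, which you sketch), Theorem \ref{Main} to exclude branch points, the distance-non-increasing property of the isometric immersion to get $E(g)\leq h(\rho)$ from orbit counting, and then Theorem \ref{thm: entbound} combined with the Gauss equation and non-positive curvature of $G/K$. The paper's proof is exactly this chain of inequalities, so there is nothing substantive to add.
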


\begin{proof}
Let $\rho:\pi_1(\Sigma)\rightarrow G$ be a Hitchin representation, by a theorem of Labourie \cite{LAB08} there exists a $\rho$-equivariant branched minimal immersion,
\begin{align}
f_{\rho}:\widetilde{\Sigma}\rightarrow G/K.
\end{align}
Since minimal maps are equivalently conformal, harmonic maps \cite{SU82}, Theorem \ref{Main} implies $f_{\rho}$ is an immersion.  Let $g$ be the induced metric via the immersion $f_{\rho}.$  Fixing $p\in\widetilde{\Sigma},$ define the function,
\begin{align}
\widetilde{N}_{p}(R):=\lvert \{\gamma \in \pi_1(\Sigma) \ | \ d_{g}(p,\gamma(p))\leq R\} \rvert.
\end{align}
 Because $\rho$ is a Hitchin representation, it is faithful \cite{LAB06}.  Since $f_{\rho}$ is an isometric immersion, it follows that,
\begin{align}
\widetilde{N}_{p}(R)\leq N_{f_{\rho}(p),\rho}(R).
\end{align}
This inequality is just expressing the fact that distances measured in $g$ dominate distances measured in the symmetric space.
Taking logarithms of each side, dividing by $R,$ and taking the $\limsup$ reveals,
\begin{align} \label{growthbound}
\limsup_{R\rightarrow \infty}\frac{\widetilde{N}_{p}(R)}{R}\leq \limsup_{R\rightarrow \infty}\frac{N_{f_{\rho}(p),\rho}(R)}{R}.
\end{align}
It is a straightforward exercise to see that the left hand side converges to $E(g)$ and the right hand side converges to $h(\rho,f_{\rho}(p))$ by definition.  This implies,
\begin{align}
E(g)\leq h(\rho).
\end{align}
Since $f$ is minimal, the Gauss equation reads,
\begin{align}
K_{g}=\text{\textnormal{Sec}}(T_{f_{\rho}}(f_{\rho}(\widetilde{\Sigma})))-\frac{1}{2}\lVert B\rVert_{g}^2.
\end{align}
Since $G/K$ is a non-compact symmetric space, it has non-positive sectional curvature from which it follows that $g$ has non-positive sectional curvature.  Applying Theorem \ref{entbound},
\begin{align}
\frac{1}{\text{Vol}(g)}\int_{\Sigma} \sqrt{-K_g}\ dV_{g}\leq E(g)\leq h(\rho,f_{\rho}(p)).
\end{align}
Plugging in the Gauss equation gives the desired estimate,
\begin{align}
\frac{1}{\text{Vol}(g)}\int_{\Sigma} \sqrt{-\text{\textnormal{Sec}}(T_{f_{\rho}}(f_{\rho}(\widetilde{\Sigma})))+\frac{1}{2}\lVert B\rVert_{g}^2}\ dV_{g}\leq h(\rho).
\end{align}
\end{proof}
This inequality gives a cheap way to see that that the volume entropy of $\rho$ is positive.  Employing thermodynamic formalism, much more refined control on this quantity is obtained by Sambarino \cite{SAM12}.
\begin{corollary}
With the notation above, $h(\rho)>0.$
\end{corollary}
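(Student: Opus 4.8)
The plan is to extract strict positivity directly from the inequality of Theorem~\ref{entbound}; the entire content is that the left-hand integrand cannot vanish identically. First I would rewrite the left-hand side in a more transparent form. By the Gauss equation recorded in the proof of Theorem~\ref{entbound}, the expression under the radical is exactly $-K_g$, so the quantity being bounded is
\begin{align}
\frac{1}{\text{Vol}(g)}\int_{\Sigma}\sqrt{-K_g}\ dV_g.
\end{align}
Since $G/K$ has non-positive sectional curvature, the Gauss equation $K_g=\text{Sec}-\tfrac{1}{2}\lVert B\rVert_g^2$ forces $K_g\leq 0$ pointwise, so the integrand is a well-defined non-negative function, and it suffices to prove that this normalized average is strictly positive. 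Because $f_\rho$ is a genuine immersion by Theorem~\ref{Main}, the induced metric $g$ is an honest Riemannian metric with $\text{Vol}(g)>0$, so the normalization causes no difficulty.

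The key step is to rule out $K_g\equiv 0$. If $K_g$ were identically zero, then $g$ would be a flat metric on the closed surface $\Sigma$ of genus at least two. This is impossible by Gauss--Bonnet: integrating gives $\int_\Sigma K_g\, dV_g = 2\pi\chi(\Sigma)<0$ in genus at least two, so $K_g$ must be strictly negative on a set of positive $dV_g$-measure. Consequently $\int_\Sigma \sqrt{-K_g}\, dV_g>0$, and the averaged quantity above is strictly positive. Chaining this with Theorem~\ref{entbound} yields
\begin{align}
0 < \frac{1}{\text{Vol}(g)}\int_{\Sigma}\sqrt{-K_g}\ dV_g \leq h(\rho),
\end{align}
which is the assertion.

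There is no serious obstacle here; the only input requiring care is topological, namely that genus at least two prevents a flat induced metric, and this is precisely what excludes the degenerate case in which the entropy bound would be vacuous. It is worth emphasizing that the genus hypothesis enters exactly at this point, and that the conclusion is insensitive to any finer geometry of the equivariant minimal surface beyond the sign of its total curvature.
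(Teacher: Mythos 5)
Your proof is correct and is essentially the paper's argument run in the contrapositive direction: the paper assumes $h(\rho)=0$, deduces that the integrand $\sqrt{-\text{Sec}+\tfrac{1}{2}\lVert B\rVert_g^2}=\sqrt{-K_g}$ vanishes identically, and contradicts Gauss--Bonnet, whereas you show directly via Gauss--Bonnet that $\int_\Sigma \sqrt{-K_g}\,dV_g>0$ and then invoke the entropy inequality. The ingredients (Theorem~\ref{entbound}, the Gauss equation for a minimal immersion, non-positivity of the ambient curvature, and Gauss--Bonnet on a genus $\geq 2$ surface) are identical, so no further comparison is needed.
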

\begin{proof}
Suppose $h(\rho)=0.$ Then Theorem \ref{entbound} implies that for any $\rho$-equivariant, immersed minimal surface with induced metric and second fundamental form $(g,B),$ the following equality holds:
\begin{align}
\frac{1}{\text{\textnormal{Vol}}(g)}\int_{\Sigma}\sqrt{-\text{\textnormal{Sec}}(T_{f_{\rho}}(f_{\rho}(\widetilde{\Sigma})))+\frac{1}{2}\lVert B\rVert_{g}^2} \ dV_{g}=0.
\end{align}
This implies that the integrand vanishes,
\begin{align} \label{equal 0}
\sqrt{-\text{\textnormal{Sec}}(T_{f_{\rho}}(f_{\rho}(\widetilde{\Sigma})))+\frac{1}{2}\lVert B\rVert_{g}^2}=0.
\end{align}
Since $G/K$ has non-positive sectional curvature, \eqref{equal 0} implies, 
\begin{align}
\text{\textnormal{Sec}}(T_{f_{\rho}}(f_{\rho}(\widetilde{\Sigma})))=\lVert B\rVert_{g}^2=0.
\end{align}
By the Gauss equation, $K_{g}=\text{\textnormal{Sec}}(T_{f_{\rho}}(f_{\rho}(\widetilde{\Sigma})))-\frac{1}{2}\lVert B\rVert_{g}^2=0.$  Since $(\Sigma,g)$ is a metric on a surface of genus at least two, this is impossible by the Gauss-Bonnet theorem.  This completes the proof.
\end{proof}
The following Corollary is immediate from Theorem \ref{entbound}.  
\begin{corollary}
Let $\rho_n:\pi_{1}(\Sigma)\rightarrow G$ be a sequence of Hitchin representations with corresponding sequence of $\rho_n$-equivariant minimal surfaces,
\begin{align}
f_{\rho_n}:\widetilde{\Sigma}\rightarrow G/K.
\end{align}
Suppose $h(\rho_n)\rightarrow 0.$
Then,
\begin{align}
\frac{1}{\text{\textnormal{Vol}}(g_n)}\int_{\Sigma}\sqrt{-\text{\textnormal{Sec}}\left(T_{f_{\rho_n}}(f_{\rho_n}(\widetilde{\Sigma}))\right)+\frac{1}{2}\lVert B_n\rVert_{g_n}^2} \ dV_{g_n} \rightarrow 0.
\end{align}
\end{corollary}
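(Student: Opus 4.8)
The plan is to obtain this as an immediate consequence of Theorem \ref{entbound} via a squeeze argument. First I would apply Theorem \ref{entbound} separately to each representation $\rho_n$ in the sequence, together with its associated $\rho_n$-equivariant minimal immersion $f_{\rho_n}$ and the corresponding induced data $(g_n, B_n)$. This produces, for every $n$, the term-by-term inequality
\begin{align}
0 \leq \frac{1}{\text{\textnormal{Vol}}(g_n)}\int_{\Sigma}\sqrt{-\text{\textnormal{Sec}}\left(T_{f_{\rho_n}}(f_{\rho_n}(\widetilde{\Sigma}))\right)+\frac{1}{2}\lVert B_n\rVert_{g_n}^2} \ dV_{g_n} \leq h(\rho_n),
\end{align}
so that the sequence whose limit we wish to compute is trapped between $0$ and $h(\rho_n)$.

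The left inequality above is the only point requiring a moment's justification: the integrand is a square root, so non-negativity of the displayed quantity reduces to checking that the radicand is non-negative pointwise. Since $G/K$ is a symmetric space of non-compact type it has non-positive sectional curvature, whence $-\text{\textnormal{Sec}}\left(T_{f_{\rho_n}}(f_{\rho_n}(\widetilde{\Sigma}))\right) \geq 0$ along the minimal surface, while $\frac{1}{2}\lVert B_n\rVert_{g_n}^2 \geq 0$ trivially; adding these gives a non-negative radicand, hence a well-defined non-negative integral. This argument is uniform in $n$ because the curvature sign of $G/K$ is fixed independently of the representation.

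Finally, invoking the hypothesis $h(\rho_n) \to 0$ and applying the squeeze theorem to the displayed chain of inequalities yields the claimed convergence to $0$. I expect no genuine obstacle here: the entire content of the result is carried by Theorem \ref{entbound}, and the only verification needed is the term-by-term non-negativity of the functional, which follows from the standard fact that non-compact symmetric spaces are non-positively curved. In particular, no compactness of the space of Hitchin representations, and no control on the individual limiting behavior of the geometric quantities $\text{\textnormal{Sec}}$ and $\lVert B_n\rVert_{g_n}$, is required — only that their combination is dominated by $h(\rho_n)$.
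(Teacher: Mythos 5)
Your proposal is correct and is precisely the argument the paper intends: the paper states this corollary is ``immediate from Theorem \ref{entbound},'' and the implicit content is exactly your squeeze argument, with non-negativity of the integrand following from the non-positive curvature of the symmetric space $G/K$. Nothing further is needed.
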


\textbf{Remark:}  The mantra of this corollary is: nearly zero volume entropy implies nearly flat, nearly totally geodesic equivariant minimal surfaces.  We make the important remark that \textit{nearly} in the previous sentence is only meant to be interpreted in a measure theoretic sense: it is certainly possible, and actually expected, that there exists points on the surface where the curvature concentrates and blows up in the limit.

The above corollary is only useful if we can produce sequences of Hitchin representations satisfying the hypothesis that $h(\rho_n)\rightarrow 0.$  Some very recent progress has been made in this direction for $\text{SL}_3(\mathbb{C})$-Hitchin representations by Zhang \cite{ZHA13} who exhibits certain sequences of Hitchin representations along which $h(\rho)\rightarrow 0.$  In another direction, a recent preprint of Collier-Li \cite{CL14} exhibits sequences of $\text{SL}_n(\mathbb{C})$-Hitchin representations for which the equivariant minimal surfaces become increasingly flat in the induced metric.  Their methods, which entail obtaining precise asymptotics on the Harmonic metric associated to certain one parameter families of Hitchin representations, are totally different than those present here.  It is unknown if the sequences $\rho_n:\pi_1(\Sigma)\rightarrow \text{SL}_n(\mathbb{C})$ which they study satisfy $h(\rho_n)\rightarrow 0.$ 

The problem of understanding the formal mathematical objects encoded in the limit remains a very interesting one, we mention the paper of Parreau \cite{PAR12} which gives an algebraic/topological interpretation in terms of actions on Buildings.  

\subsection{Hitchin representations as metrics on surfaces}

Our final theorem gives a way to interpret Hitchin representations as Riemannian metrics on surfaces.  This theorem is not very satisfying, as we would like to dispense with the dependence on the complex structure, but as it stands it is the best we can do.  In what follows $\mathcal{M}(\Sigma)$ denotes the set of Riemannian metrics on $\Sigma$ up to isotopy.  
\begin{theorem}
Fix a complex adjoint simple Lie group $G,$ and let $\text{\textnormal{Hit}}(G)$ be the space of all Hitchin representation considered up to conjugacy.  Then for each choice of marked complex structure $\sigma\in \mathcal{T}$ (where $\mathcal{T}$ is the Teichm\"uller space of $\Sigma),$ there is a mapping class group equivariant mapping
\begin{align}
\mathfrak{U}_{\sigma}:\text{\textnormal{Hit}}(G)\rightarrow \mathcal{M}(\Sigma).
\end{align}
\end{theorem}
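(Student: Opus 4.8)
The plan is to define $\mathfrak{U}_{\sigma}$ by pulling back the symmetric space metric through the equivariant harmonic map, and to observe that Theorem \ref{Main} is precisely what guarantees the result is an honest Riemannian metric. First I would fix a representative $\rho$ of a conjugacy class in $\text{Hit}(G)$ together with a complex structure representing $\sigma.$ By Corlette \cite{COR88} there is a unique $\rho$-equivariant harmonic map $f_{\rho}:(\widetilde{\Sigma},\sigma)\rightarrow G/K,$ and by Theorem \ref{Main} this map is an immersion. I would then set $g_{\rho}:=f_{\rho}^{*}\langle\,\cdot\,,\,\cdot\,\rangle_{G/K};$ because $f_{\rho}$ is an immersion, $g_{\rho}$ is a positive-definite symmetric $2$-tensor on $\widetilde{\Sigma},$ that is, a genuine Riemannian metric. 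This is the only place the immersion property enters, and it is the entire reason the construction lands in $\mathcal{M}(\Sigma)$ rather than in the larger space of possibly degenerate pull-back tensors.

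Next I would push the metric down to $\Sigma.$ Since the symmetric space metric is $G$-invariant and $f_{\rho}(\gamma\cdot x)=\rho(\gamma)\cdot f_{\rho}(x),$ each deck transformation $\gamma\in\pi_1(\Sigma)$ acts as an isometry of $(\widetilde{\Sigma},g_{\rho}),$ so $g_{\rho}$ descends to a Riemannian metric on the closed surface $\Sigma=\widetilde{\Sigma}/\pi_1(\Sigma).$ To see that the assignment is well defined I would check two independences. For conjugacy: if $\rho'=c\rho c^{-1}$ with $c\in G,$ uniqueness in Corlette's theorem forces $f_{\rho'}=c\cdot f_{\rho}$ (left translation by $c$), and since left translations are isometries of $G/K$ the two pull-back metrics coincide, so $g_{\rho}$ depends only on the conjugacy class. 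For the representative of $\sigma$: two complex structures representing the same point of $\mathcal{T}$ differ by a diffeomorphism $\psi$ isotopic to the identity, whose lift is a biholomorphism intertwining the two harmonic maps after conjugating $\rho$ by the (inner) automorphism $\psi_{*};$ the two descended metrics therefore differ by the isotopy $\psi,$ hence define the same class in $\mathcal{M}(\Sigma).$ This produces a well-defined map $\mathfrak{U}_{\sigma}([\rho])=[g_{\rho}].$

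Finally I would establish mapping class group equivariance by appealing to the naturality of the harmonic map under diffeomorphisms. Given $\phi\in\text{Mod}(\Sigma)$ with a representing diffeomorphism $\Phi,$ a lift $\widetilde{\Phi}$ to $\widetilde{\Sigma}$ is a biholomorphism $(\widetilde{\Sigma},\sigma)\rightarrow(\widetilde{\Sigma},\phi\cdot\sigma),$ and precomposition with $\widetilde{\Phi}^{-1}$ carries the $\rho$-equivariant harmonic map for $\sigma$ to the $(\phi\cdot\rho)$-equivariant harmonic map for $\phi\cdot\sigma,$ again by the uniqueness clause of Corlette's theorem. Pulling back the symmetric space metric through this identification yields the relation $\mathfrak{U}_{\phi\cdot\sigma}(\phi\cdot\rho)=\phi\cdot\mathfrak{U}_{\sigma}(\rho),$ which is the asserted equivariance intertwining the maps attached to different marked complex structures.

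The substantive content is already contained in Theorem \ref{Main}: once immersiveness is known, the construction of $\mathfrak{U}_{\sigma}$ and its well-definedness are essentially formal. Consequently I do not expect a genuine obstacle, but rather a point requiring care, namely the bookkeeping in the last two verifications: one must track how conjugation of $\rho,$ change of complex-structure representative, and the lift $\widetilde{\Phi}$ of a mapping class each interact with the uniqueness statement for equivariant harmonic maps, so that everything descends correctly both to the quotient surface and to isotopy classes of metrics. The non-injectivity caveat noted in the introduction (arising from the Higgs-bundle circle action) is not part of the statement being proved and does not enter this argument.
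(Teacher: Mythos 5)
Your proposal is correct and follows essentially the same route as the paper: pull back the symmetric space metric through the equivariant harmonic map, invoke Theorem \ref{Main} to guarantee the pull-back is positive definite, descend to $\Sigma$ by equivariance, and deduce mapping class group equivariance from the uniqueness clause of Corlette's theorem applied to $f_{\rho}\circ\widetilde{\eta}$. In fact your write-up is somewhat more careful than the paper's, which omits the explicit checks of independence of the conjugacy-class representative and of the representative of $\sigma$ that you carry out.
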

\textbf{Remark:}  It would take us too far afield to attempt a careful proof here, but the above map should be real analytic.  We give a brief outline: it is well-known that the non-abelian Hodge correspondence identifiying the conjugacy classes of irreducible representations with the set of stable Higgs bundles is real analytic.  The harmonic map is constructed by solving a semi-linear system of elliptic partial differential equations and hence by the regularity theory developed by Morrey, the harmonic map depends real analytically on the coefficients of that elliptic system.  As pulling back the metric from the symmetric space is a quadratic expression in the derivatives of the harmonic map, this pull-back also depends real analytically on the coefficients.  This completes the sketch.
\begin{proof}
By Theorem \ref{Main}, for any $\rho\in \text{Hit}(G)$ the corresponding equivariant harmonic map,
\begin{align}
f_{\rho}:(\widetilde{\Sigma},\sigma)\rightarrow G/K,
\end{align}
is an immersion.  The Killing form on $\mathfrak{g}$ induces a left-invariant Riemannian metric on $G/K$ which we denote by $H_G.$  Pulling back $H_{G}$ via the immersion $f_{\rho}$
defines a Riemannian metric on the universal cover $\widetilde{\Sigma}.$  Moreover, by equivariance the deck group $\pi_1(\Sigma)$ acts via isometries and this metric descends to the quotient surface $\Sigma;$ this recipe defines $\mathfrak{U}_{\sigma}(\rho).$  

Now, suppose that $\eta: \Sigma\rightarrow \Sigma$ is a diffeomorphism and $\widetilde{\eta}:\widetilde{\Sigma}\rightarrow \widetilde{\Sigma}$ a lift to the universal cover.  Then,
\begin{align}
f_{\rho}(\widetilde{\eta}(\gamma(x)))=f_{\rho}(\eta_{*}(\gamma)(x))=\rho(\eta_{*}(\gamma))f_{\rho}(x),
\end{align}
for every $\gamma\in \pi_1(\Sigma)$ acting by deck transformations.  Thus, the equivariant map for the representation $\rho \circ \eta_{*}$ is $f_{\rho}\circ \widetilde{\eta}.$  Hence, the corresponding pullback metric changes by pullback via $\eta.$  This proves the mapping class group equivariance.
\end{proof}

\textbf{Remark:}  As mentioned in the introduction, unless $G=\text{PSL}_{2}(\mathbb{C}),$ the above map will not be injective.  The $\mathbb{S}^1$-action given by multiplying the Higgs field by $e^{i\theta}$ produces non-trivial (provided the rank is greater than one) one-parameter families of harmonic maps which all have the same energy.  If the harmonic map is in addition conformal, these conformal harmonic maps (hence minimal surfaces) all have isometric pullback metrics.  The families thus obtained are well known in the integrable systems literature where they are called the associated family of harmonic maps.  

\textbf{Remark:}  This mapping suggests the possibility of studying the geometry of the space of Hitchin representations via the study of this space of Riemannian metrics.  That being said, developing workable expressions for the Riemannian metrics one obtains in this fashion seems a challenging proposal.

\bibliography{DDBib}{}
\bibliographystyle{alpha}

\end{document}